\NewDocumentCommand{\dgal}{sO{}m}{%
  \IfBooleanTF{#1}
    {\dgalext{#3}}
    {\dgalx[#2]{#3}}%
}
\NewDocumentCommand{\dgalext}{m}{%
  \sbox0{%
    \mathsurround=0pt % just for safety
    $\left\{\vphantom{#1}\right.\kern-\nulldelimiterspace$%
  }%
  \sbox2{\{}%
  \ifdim\ht0=\ht2
    \{\kern-.45\wd2 \{#1\}\kern-.45\wd2 \}%
  \else
%    \left\{\kern-.5\wd0\left\{#1\right\}\kern-.5\wd0\right\}%
  \fi
}
\NewDocumentCommand{\dgalx}{om}{%
  \sbox0{\mathsurround=0pt$#1\{$}%
  \sbox2{\{}%
  \ifdim\ht0=\ht2
    \{\kern-.45\wd2 \{#2\}\kern-.45\wd2 \}%
  \else
    \mathopen{#1\{\kern-.5\wd0 #1\{}
    #2
    \mathclose{#1\}\kern-.5\wd0 #1\}}
  \fi
}
\def\tank#1{\protected@xdef\@thanks{\@thanks
		\protect\footnotetext[0]{#1}}}
\def\bigfoot{
	
	\@footnotetext}
\newcommand{\ea}{\end{array}}
\newtheorem{example}{Example}[section]
\def\De{{\Delta}}
\def\De{{\Delta}}
\pgfplotsset{compat=1.18}
\title{Basis Construction for Spline Spaces over Arbitrary Partitions from a Dimensional Stable Perspective}
\author{Bingru Huang\thanks{Corresponding author. School of Mathematical Sciences, University of Science and Technology of China, Hefei 230026, China (\email{hbr999@ustc.edu.cn}).}
% \and Falai Chen\thanks{School of Mathematical Sciences, University of Science and Technology of China, Hefei 230026, China (\email{chenfl@ustc.edu.cn}).}
}
\begin{document}  

\maketitle

% REQUIRED
\begin{abstract}
This paper introduces a novel framework for constructing $C^r$ basis functions for polynomial spline spaces of degree $d$ over arbitrary planar polygonal partitions, overturning the belief that basis functions cannot be constructed on dimensional unstable meshes. We provide a comprehensive comparison of basis construction methods, classifying them as explicit, semi-implicit, and implicit. Our method, a semi-implicit construction using Extended Edge Elimination conditions, uniquely resolves fundamental theoretical challenges in the basis construction of spline spaces by ensuring a complete basis. For the first time, we construct explicit basis functions for the spline space over the Morgan-Scott partition, previously unachieved, and elucidate dimensional instability through this construction.
\end{abstract}

% REQUIRED
\begin{keywords}
Basis construction for spline space, Arbitrary partition, Dimensional stability, Isogeometric analysis.
\end{keywords}

% REQUIRED
\begin{AMS}
65D17, 65D07, 65N30.
\end{AMS}
%%%%%%%%%%%%%%%%%%%%%%%%%%%%%%%%%%%%%%%%%%%%%%%%%%%%%%%%%%%%%%%%%%%%%%%
% \title{\Large\bf Basis Construction for Spline Spaces over Arbitrary Partitions from a Dimensional Stable Perspective}

% \author{Bingru Huang\thanks{Corresponding author.\\E-mail addresses:hbr999@ustc.edu.cn} \quad Falai Chen \\  School of Mathematics\\  University of Science and Technology of China\\  Hefei,Anhui 230026 P.R. of China}
% \date{}
% \maketitle
%%%%%%%%%%%%%%%%%%%%%%%%%%%%%%%%%%%%%%%%%%%%%%%%%%%%%%%%%%%%%%%%%%%%%%%%
% \begin{center}
% 	\begin{minipage}{160mm}
% 		{\bf Abstract.}  This paper introduces a novel framework for constructing $C^r$ basis functions for polynomial spline spaces of degree $d$ over arbitrary planar polygonal partitions, overturning the belief that basis functions cannot be constructed on dimensionally unstable meshes. We provide a comprehensive comparison of basis construction methods, classifying them as explicit, semi-implicit, and implicit. Our method, a semi-implicit construction using Extended Edge Elimination conditions, uniquely resolves all theoretical challenges in spline spaces by ensuring a complete basis. For the first time, we construct basis functions for the spline space over the Morgan-Scott partition, previously unachieved, and elucidate dimensional instability through this construction.
% 	\end{minipage}
% \end{center}
%%%%%%%%%%%%%%%%%%%%%%%%%%%%%%%%%%%%%%%%%%%%%%%%%%%%%%%%%%%%%%%%%%%%%%%

\section{Introduction}
% Introducing the context of polynomial spline spaces
Polynomial spline spaces defined over planar polygonal partitions, such as T-meshes and triangulations, have been a subject of study since the 1970s \cite{schumaker1979dimension, nurnberger2000developments}. These spaces are significant in approximation theory, numerical analysis \cite{mingjunlai2007, schumaker2007spline}, and isogeometric analysis \cite{cottrell2009isogeometric}, particularly in geometric modeling \cite{cohen2001geometric} and engineering applications \cite{cottrell2009isogeometric}. Research on polynomial spline spaces typically focuses on two fundamental problems: dimension calculation and basis construction.

% Discussing dimension calculation challenges
For dimension calculation, spline spaces over triangulations have been extensively studied, with key results from the 1970s to 2000s summarized in Section 3 of \cite{nurnberger2000developments} and recent advancements in Section 2 of \cite{grovselj2023extraction, lyche2025c1}. For T-meshes, significant progress has been made over the past two decades \cite{dim2006, dim20061, dim2014, dim2016}. However, a critical challenge in both triangulation and T-mesh spline spaces is dimensional instability \cite{morgan1977dimension, alfeld1986dimension, gmelig1985dimension, shi1991singularity, deng2000note, Ins2011, Ins2012, huang2023,huang2025preliminarystudydimensionalstability}. This instability implies that the dimension of highly smooth spline spaces depends not only on topological properties but also on geometric configurations of the polygonal partition. A notable example is the Morgan-Scott triangulation \cite{morgan1977dimension}.

% Addressing basis construction issues
In basis construction, dimensional instability in highly smooth spline spaces over specific partitions complicates the direct construction of basis functions. Traditionally, researchers have sought to avoid issues arising from dimensional instability when constructing basis functions. For spline spaces over general polygonal partitions, basis functions have been constructed for cross-cut and quasi-cross-cut partitions \cite{wang2013multivariate, manni1992dimension}. Recent advancements for triangulations include new constructions \cite{speleers2013construction,lyche2022parsimonious, grovselj2023extraction, grovselj2017construction,lyche2025c1}. For T-meshes, notable constructions include T-splines \cite{ts1, ts2, ast}, HB-splines \cite{HB1, HB2}, LR-splines \cite{lr}, and PHT-splines \cite{PHT}. However, except for PHT-splines, these typically do not form a complete basis for the polynomial spline space over the corresponding T-mesh \cite{buffa2010linear, HB3, HB4, lr}. It must be emphasized that in non-basis function construction, the space spanned by linearly independent spline functions may not reproduce polynomials~\cite{ts1,ts2}, leading optimal convergence cannot be guaranteed.

% Presenting the current study's approach
This paper extends the framework of \citet{zhong2025basisconstructionpolynomialspline} to polynomial spline spaces over arbitrary planar polygonal partitions. We observe that any polygonal partition can be transformed into a cross-cut or quasi-cross-cut partition, which possess stable dimensions and well-defined basis function structures \cite{wang2013multivariate, manni1992dimension}. By applying the Extended Edge Elimination Condition (EEE condition), we derive a basis for the spline space over the original partition. To address the complexity of the EEE condition due to excessive extensions, we propose a simplified framework based on dimensional stability. This framework also facilitates the analysis of dimensional instability.

% Highlighting contributions
This study advances basis construction for spline spaces over arbitrary polygonal partitions, challenging the notion that basis functions cannot be constructed on dimensional unstable meshes. The key contributions are:

\begin{itemize}
    \item A generalized framework for constructing $C^r$ basis functions in polynomial spline spaces of arbitrary degree over any planar polygonal partition.
    \item A comprehensive comparison of mainstream basis construction methods, proposing three distinct approaches and highlighting the advantages of the proposed method.
    \item An analysis of basis construction under dimensional instability, using the Morgan-Scott partition to demonstrate that basis functions can be constructed for dimensional unstable polygonal partitions.
\end{itemize}

% Outlining the paper structure
The paper is organized as follows. Section 2 reviews key concepts of spline spaces over polygonal partitions and foundational results on basis construction for cross-cut and quasi-cross-cut partitions. Section 3 presents the framework for constructing $C^r$ basis functions for polynomial spline spaces of arbitrary degree. Section 4 provides an example of basis construction for the Morgan-Scott partition, comparing it with existing methods, and proposes an optimized framework for practical applications. Section 5 concludes with a summary and directions for future work.

%%%%%%%%%%%%%%%%%%%%%%%%%%%%%%%%%%%%%%%%%%%%%%%%%%%%%%%%%%%%%%%%%%%%%%%%%
\section{Preliminaries}\label{sec.prelim}
In this section, we briefly introduce polynomial spline spaces over planar polygonal partitions, including triangulations and T-meshes. We also discuss cross-cut partitions, quasi-cross-cut partitions, and the construction of basis functions for these partition types.

Let $\Delta$ be a polygonal partition of a simply connected polygonal domain $\Omega$ in $\mathbb{R}^2$, we call such polygonal partition a \textbf{regular} polygonal partition.

For given integers $\mu, d, 0\le\mu\le d$, the spline space of degree $d$ with smoothness $\mu$ with respect to $\Delta$ is defined by
$$S_d^{\mu}(\Delta):=\{s(x,y)\in C^{\mu}(\Omega): s(x,y)|_{\phi}\in \Pi_{d} \ \ \text{for \ } \phi\in\Delta\}$$
where $\Pi_{d}:=\mathrm{span}\{x^iy^j:i,j\in\mathbb{N},0\le i+j\le d\}$ is the polynomial space of total degree $d$.

This paper constructs a basis for the spline space $S_d^{\mu}(\Delta)$ over an arbitrary polygonal partition $\Delta$. We consider two specific cases: triangulation and T-mesh. A triangulation of a domain $\Omega$ is a partition into a finite set of non-overlapping triangles, covering $\Omega$, with pairwise intersections only at shared vertices or edges. A T-mesh is a rectangular grid with a simply connected interior, permitting T-junctions. The spline spaces over these partitions are defined below.

\begin{itemize}
    \item \emph{Spline space over triangulations:} For a triangulation $\Delta$ of a polygonal domain, the spline space is defined as
    $$S_d^{\mu}(\Delta):=\{s(x,y)\in C^{\mu}(\Omega): s(x,y)|_{\phi}\in \Pi_{d} \ \ \text{for \ } \phi\in\Delta\}$$
    is called a spline space over triangulation $\Delta$.

    \item \emph{Spline space over T-mesh:} For a T-mesh $\Delta$ of a polygonal domain, the spline space over T-mesh is defined as
    $$S_{\textbf{d}}^{\boldsymbol{\mu}}(\Delta):=\{s(x,y)\in C^{\boldsymbol{\mu}}(\Omega): s(x,y)|_{\phi}\in \Pi_{d}\otimes\Pi_{d} \ \ \text{for \ } \phi\in\Delta\}$$
    where $\boldsymbol{\mu}=(\mu,\mu)$, $C^{\boldsymbol{\mu}}(\Omega)$ is the space consisting of all bivariate functions continuous in $\Omega$ with order $\mu$ along both the $x$-direction and $y$-direction.
\end{itemize}

The polynomial and smoothness spaces in spline spaces over T-meshes differ from those over polygonal partitions. For regular partitions, they are related as follows.
\begin{itemize}
\item For the polynomial space $\Pi_d \otimes \Pi_d$, we have $\Pi_d \otimes \Pi_d \subseteq \Pi_{2d}$, where $\subseteq$ denotes set inclusion, not subspace inclusion.
\item For the smoothness space $C^{\boldsymbol{\mu}}(\Omega)$, Lemma 3.1 in \cite{zeng2016dimensions} implies $S_{\boldsymbol{d}}^{\boldsymbol{\mu}}(\Delta) \subseteq C^{\mu}(\Omega)$.
\end{itemize}
Thus, for a regular T-mesh, the spline space is a subset of $S_{2d}^{\mu}(\Omega)$. Many results for spline spaces over polygonal partitions extend to those over T-meshes, though the latter possess additional properties due to their special definition.

We now present results on cross-cut and quasi-cross-cut partitions, including dimension formulas and basis constructions; details appear in \cite{wang2013multivariate,chui1983multivariate,chui1983smooth,manni1992dimension}.

For a polygonal partition of domain $D$, a straight line segment is a cross-cut if both endpoints lie on $\partial D$. A cross-cut partition, denoted $\Delta_c$, occurs when all grid lines are cross-cuts. A straight line segment is a ray if one endpoint lies on $\partial D$. A quasi-cross-cut partition, denoted $\Delta_{qc}$, occurs when each grid line is either a cross-cut or a ray. The dimensions of spline spaces over $\Delta_c$ and $\Delta_{qc}$ are given in \cite{chui1983multivariate,manni1992dimension}.

\begin{lemma}~\cite{chui1983multivariate}\label{lem dim cross-cut}
Let $\Delta_c$ be a cross-cut partition of $\Omega$ with $L$ cross-cuts and $V$ interior vertices $A_1,\ldots,A_V$ in $\Omega$ such that $N_i$ cross-cuts intersect at $A_i$ for $i=1,2,\ldots,V$. Then the dimension of spline space over $\Delta_c$ is 
$$\dim S_{d}^{\mu}(\Delta_c)=\binom{d+2}{2}+L\binom{d-\mu-1}{2}+\sum\limits_{i=1}^{V}k_{d}^{\mu}(N_i).$$
where $k_d^{\mu}(N_i)$ is defined as follow
\begin{equation}\label{kdmu}
    k_d^{\mu}(N_i)=\sum\limits_{j=1}^{d-\mu}\left(N_i(d-\mu-j+1)-(d-j+2)\right)_{+}.
\end{equation}
\end{lemma}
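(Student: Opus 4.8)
The plan is to use the smoothing-cofactor conformality method. First I would reduce the global $C^\mu$ requirement to a local algebraic condition: if two degree-$d$ pieces $p,\tilde p$ meet with $C^\mu$ continuity across an edge carried by a line $\ell=0$, then $\ell^{\mu+1}$ divides $p-\tilde p$, so the jump can be written as $p-\tilde p=\ell^{\mu+1}q$ for a smoothing cofactor $q\in\Pi_{d-\mu-1}$. Conversely, prescribing a polynomial on one reference cell together with a cofactor on each interior edge determines a candidate piecewise polynomial, and this candidate lies in $S_d^\mu(\Delta_c)$ exactly when the cofactors are globally consistent, that is, when the signed jumps accumulated around every interior vertex cancel. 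This recasts the dimension as $\binom{d+2}{2}$, the free choice of the reference polynomial, plus the dimension of the space of admissible cofactor systems.

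Next I would organize the cofactor count using the defining feature of a cross-cut partition: each cross-cut runs from boundary to boundary, so a cross-cut is interrupted only at the interior vertices lying on it, where transverse cross-cuts pass through. Assigning to each of the $L$ cross-cuts a base cofactor free in $\Pi_{d-\mu-1}$ accounts for the term linear in $L$, and it then remains to record how a cofactor is permitted to jump as one crosses an interior vertex along the cross-cut. Because distinct cross-cuts separate $\Omega$ into cells joined by edge-crossing paths, the only relations tying the cofactors together are local to the interior vertices, so the remaining degrees of freedom split as an independent sum over $A_1,\dots,A_V$.

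The crux is the local computation at a single interior vertex $A_i$ through which $N_i$ distinct lines $\ell_{i,1}=0,\dots,\ell_{i,N_i}=0$ pass. There the conformality condition is the single polynomial identity $\sum_{k=1}^{N_i}\ell_{i,k}^{\mu+1}Q_{i,k}=0$ in the cofactor increments $Q_{i,k}\in\Pi_{d-\mu-1}$, and the extra freedom it contributes is the dimension of its solution space, namely the module of syzygies of the powers $\ell_{i,1}^{\mu+1},\dots,\ell_{i,N_i}^{\mu+1}$ in degree $d-\mu-1$. By rank-nullity this equals $N_i\dim\Pi_{d-\mu-1}$ minus $\dim\bigl(\sum_k\ell_{i,k}^{\mu+1}\Pi_{d-\mu-1}\bigr)$ inside $\Pi_d$, which I would evaluate by a graded inclusion-exclusion: the lines through $A_i$ being pairwise distinct, their $(\mu+1)$-st powers are pairwise coprime, so pairwise intersections take the form $\ell_{i,k}^{\mu+1}\ell_{i,l}^{\mu+1}\Pi_{d-2\mu-2}$ and higher intersections are controlled similarly. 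Carrying out this count degree by degree should reproduce the truncated sum $k_d^\mu(N_i)=\sum_{j=1}^{d-\mu}\bigl(N_i(d-\mu-j+1)-(d-j+2)\bigr)_+$, where the positive-part truncation records the degrees at which the power ideal already fills $\Pi_d$.

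I expect this graded syzygy evaluation to be the main obstacle: for $N_i\ge 3$ the span $\sum_k\ell_{i,k}^{\mu+1}\Pi_{d-\mu-1}$ is no longer governed by naive inclusion-exclusion, and tracking exactly when each summand contributes is precisely what produces the truncation. A secondary point I would need to settle is that the per-vertex counts are simultaneously realizable, so that the vertex contributions add without interaction; for a cross-cut partition this independence holds because the conformality identities at different vertices involve disjoint sets of cofactor increments and admit no global rank defect, which is exactly the feature that makes $\dim S_d^\mu(\Delta_c)$ depend only on the combinatorial data $L$, $V$, $N_i$ rather than on the geometric placement of the cross-cuts.
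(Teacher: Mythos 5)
The paper does not prove this lemma at all; it is quoted from \cite{chui1983multivariate}, so the only meaningful comparison is with the classical proof in that reference. Your route is that proof: the smoothing-cofactor reduction is exactly the paper's Lemma~\ref{lem continuous condition}, the boundary-to-boundary structure of cross-cuts is what localizes every conformality relation at an interior vertex, and the per-vertex syzygy count is how Chui and Wang arrive at $k_d^\mu(N_i)$. Your rank--nullity bookkeeping, $N_i\dim\Pi_{d-\mu-1}-\dim\bigl(\sum_k \ell_{i,k}^{\mu+1}\Pi_{d-\mu-1}\bigr)$, is also set up correctly.

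As a proof, however, the proposal stops exactly at the step that carries all the content. Homogenize at the vertex and grade by degree: with $H_t$ the homogeneous polynomials of degree $t$ and $\Phi_t\colon (H_t)^{N_i}\to H_{t+\mu+1}$, $(q_k)\mapsto \sum_k q_k\ell_{i,k}^{\mu+1}$, the whole formula follows from the single fact that $\dim\bigl(\ell_{i,1}^{\mu+1}H_t+\cdots+\ell_{i,N_i}^{\mu+1}H_t\bigr)=\min\{N_i(t+1),\,t+\mu+2\}$ for pairwise distinct lines through the vertex; this gives $\dim\ker\Phi_t=\bigl(N_i(t+1)-(t+\mu+2)\bigr)_+$, and the reindexing $j=d-\mu-t$ yields \eqref{kdmu}. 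You correctly observe that naive inclusion--exclusion fails for $N_i\ge 3$, but then say the count ``should reproduce'' the truncated sum --- that is precisely the nontrivial claim, and it needs a genuine argument (e.g.\ a generalized Vandermonde determinant in the directions of the lines, or an apolarity/duality argument showing powers of distinct linear forms in two variables impose independent conditions); note that only distinctness of the lines enters, which is exactly why the count is geometry-free and the dimension stable. Two further points. First, your claim that the vertex contributions add without interaction is asserted rather than proved; in the classical treatment this is obtained by exhibiting the explicit truncated-power representation (source cell plus flow direction, as the paper sketches after Lemma~\ref{lem continuous condition}), which proves spanning and linear independence simultaneously. Second, your per-cross-cut contribution is $\dim\Pi_{d-\mu-1}=\binom{d-\mu+1}{2}$, and this is the correct Chui--Wang coefficient: the $\binom{d-\mu-1}{2}$ printed in the lemma is a typo (test $d=2$, $\mu=1$, one cross-cut: the dimension is $7$, not $6$), so your argument, once completed, proves the corrected statement rather than the literal one.
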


\begin{lemma}~\cite{chui1983multivariate,manni1992dimension}\label{lem dim quasi-cross-cut}
    Let $\Delta_{qc}$ be a quasi-cross-cut partition of $\Omega$ with $L_1$ cross-cuts, $L_2$ rays and $V$ interior vertices $A_1,\ldots,A_V$ in $\Omega$ such that $N_i$ cross-cuts and rays intersect at $A_i$ for $i=1,2,\ldots,V$. Then the dimension of spline space over $\Delta_{qc}$ is 
    $$\dim S_{d}^{\mu}(\Delta_{qc})=\binom{d+2}{2}+L_1\binom{d-\mu-1}{2}+\sum\limits_{i=1}^{V}k_{d}^{\mu}(N_i).$$
    where $k_d^{\mu}(\cdot)$ has the same meaning as Lemma~\ref{lem dim cross-cut}.
\end{lemma}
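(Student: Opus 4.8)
The plan is to prove Lemma~\ref{lem dim quasi-cross-cut} by the smoothing cofactor--conformality method, following the derivation of the cross-cut formula in Lemma~\ref{lem dim cross-cut} and then isolating the one structural effect of a ray's free interior endpoint. Fix $s\in S_d^\mu(\Delta_{qc})$. Across each interior edge $e$ lying on a line $\ell_e=0$, the $C^\mu$ condition is equivalent to the existence of a smoothing cofactor $q_e$ with $\deg q_e\le d-\mu-1$ whose product $\ell_e^{\mu+1}q_e$ equals the jump of $s$ across $e$. Assigning a freely chosen polynomial on one reference cell contributes the base term $\binom{d+2}{2}$, and the remaining freedom is carried entirely by the cofactor system $\{q_e\}$ subject to the conformality constraints at the interior vertices. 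Thus $\dim S_d^\mu(\Delta_{qc})=\binom{d+2}{2}+\dim\{\text{admissible }(q_e)\}$, and everything reduces to counting the admissible cofactor systems.

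First I would record the conformality condition at an interior vertex $A$ where edges $e_1,\dots,e_m$ meet in cyclic order: summing jumps around $A$ forces the polynomial identity $\sum_{k=1}^m \ell_{e_k}^{\mu+1}q_{e_k}\equiv 0$. This is a purely local condition determined by the lines through $A$ and their multiplicities, i.e.\ by the number $N_i$ of cross-cuts and rays incident to $A_i$, and it is exactly the computation that produces the term $k_d^\mu(N_i)$ of~\eqref{kdmu}. Crucially, this local count is blind to whether an incident segment belongs to a cross-cut or a ray, since near $A_i$ both present the same line segments; hence the vertex contributions $\sum_i k_d^\mu(N_i)$ are inherited unchanged from the proof of Lemma~\ref{lem dim cross-cut}.

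The crux is to explain why a ray contributes nothing to the per-line term, so that only the $L_1$ cross-cuts appear in $L_1\binom{d-\mu-1}{2}$. Propagating the cofactors along a single line, each crossing with a transversal line $m=0$ forces $q_{e_{j-1}}-q_{e_j}$ to be divisible by $m^{\mu+1}$; the resulting jumps are the vertex freedoms already counted by $k_d^\mu(\cdot)$, while the cofactor on the first segment is a free ``base'' parameter of degree $\le d-\mu-1$. For a cross-cut, whose two endpoints both lie on $\partial\Omega$, this base cofactor is genuinely free and supplies the per-cross-cut contribution counted in Lemma~\ref{lem dim cross-cut}. For a ray $R$, however, the interior endpoint $A$ carries the additional requirement that the jump of $s$ across $R$ vanish at $A$ --- beyond the tip the two sides belong to the same cell and $s$ is smooth there --- which pins the base cofactor of $R$ to zero. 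I would show this removes precisely one per-line degree of freedom for each ray while leaving every vertex term untouched; assembling the base term, the $L_1$ surviving per-cross-cut contributions, and $\sum_i k_d^\mu(N_i)$ then yields the formula, with the base case $L_2=0$ recovering Lemma~\ref{lem dim cross-cut}.

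The main obstacle is the global bookkeeping: one must verify that the cofactors can be propagated consistently along every line and that the full collection of conformality constraints is independent, so that the local freedoms assemble into exactly the stated dimension with no hidden global relation. This independence is precisely the dimensional-stability property that singles out cross-cut and quasi-cross-cut partitions from general partitions (such as the Morgan--Scott configuration), where the analogous constraints can degenerate for special geometries. I expect this consistency-and-independence verification --- most cleanly organized by ordering the grid lines and inducting on their number, or equivalently by extending a ray to a cross-cut and tracking the new crossing vertices --- rather than any single local computation, to be the technical heart of the argument.
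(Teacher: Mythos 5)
You should first note that the paper does not prove this lemma at all: it is quoted from \cite{chui1983multivariate,manni1992dimension}, and the closest in-paper material is the smoothing-cofactor Lemma~\ref{lem continuous condition} together with the truncated-power/flow-direction construction recalled in Example~\ref{exm construction for MS}. Your overall skeleton --- source-cell polynomial giving $\binom{d+2}{2}$, edge cofactors of degree $\le d-\mu-1$, conformality at interior vertices, propagation along grid lines --- is exactly the route of those cited references, so the framework you chose is the right one.

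However, the crux of your argument, the treatment of a ray's interior endpoint, is wrong as written, and this is a genuine gap rather than a slip of wording. You claim that ``the jump of $s$ across $R$ vanish[es] at $A$, \ldots which pins the base cofactor of $R$ to zero.'' The hypothesis is vacuous: by Lemma~\ref{lem continuous condition} the jump across $R$ equals $L_R^{\mu+1}q_R$, where $L_R=0$ is the line containing $R$, and this vanishes at every point of that line --- in particular at $A$ --- for \emph{every} cofactor $q_R$, so it pins nothing. The conclusion is also false: a ray's cofactor is in general nonzero. The paper's own Example~\ref{exm construction for MS} is a counterexample: in $\mathrm{ext}_3(\Delta_{\mathrm{MS}})$ every maximal segment is a ray ($L_1=0$), yet the conformality solutions include $q_{FB}=k_1p_{FB}$ and $q_{DA}=k_3p_{DA}$, nonzero whenever $k_1,k_3\neq0$; if every ray's cofactor chain were forced to start at zero \emph{and} the vertex jumps occurred only at crossings interior to the ray, the count would give $\dim S_2^1(\mathrm{ext}_3(\Delta_{\mathrm{MS}}))=\binom{4}{2}=6$ instead of the correct value $9=\binom{4}{2}+3k_2^1(4)$. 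The correct mechanism is different: what vanishes identically is the jump across the \emph{linear extension of $R$ beyond its tip}, because no edge lies there and $s$ is a single polynomial across that virtual line. Hence the cofactor on the tip edge is itself one of the unknowns in the conformality system at the tip vertex $A$, whose solution space has dimension $k_d^\mu(N_A)$; the ray's freedom is not ``removed'' but \emph{absorbed} into the vertex term, which is precisely why rays drop out of the per-line count while $\sum_i k_d^\mu(N_i)$ is unchanged. Once this is repaired, the remaining work --- which you correctly identify but defer --- is the global spanning-and-independence argument; in \cite{chui1983multivariate,manni1992dimension}, and in the paper's Example~\ref{exm construction for MS}, this is settled constructively by exhibiting the explicit truncated-power basis along a flow direction (and proving the kernel-dimension fact that $k_d^\mu(N)$ depends only on $N$, not on the slopes), not by the rank bookkeeping you sketch.
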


Lemmas~\ref{lem dim cross-cut} and~\ref{lem dim quasi-cross-cut} show that the dimension of the spline space over these partitions is stable, depending only on topological features (e.g., $L$, $N_i$). Bases for both partitions can be constructed using the following lemma.

\begin{lemma}~\cite{wang2013multivariate}\label{lem continuous condition}
    For any polygonal partition $\Delta$,let $\phi_i,\phi_j$ are two adjcent cells of $\Delta$, $s(x,y)|_{\phi_i}=s_i(x,y), s(x,y)|_{\phi_j}=s_j(x,y)$, and the equation of the common edge of $\phi_i$ and $\phi_j$ is $ L_{ij}(x,y)=0$. Then 
    $$s(x,y)\in C^{\mu}(\overline{\phi_i\cup\phi_j})\iff  s_i(x,y)-s_j(x,y)=q_{ij}(x,y)\cdot L_{ij}(x,y)^{\mu+1}.$$
    where $q_{i,j}(x,y)\in\Pi_{d-\mu-1}$ is called \textbf{the edge cofactor}.
\end{lemma}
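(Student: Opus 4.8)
The plan is to reduce the geometric $C^{\mu}$-smoothness condition across the shared edge to a purely algebraic divisibility statement by means of an affine change of coordinates. Write $g := s_i - s_j$, which is a polynomial of total degree at most $d$ since $s_i, s_j \in \Pi_d$. Because the assertion is invariant under affine reparametrization (invertible affine maps preserve $\Pi_d$, and they carry the divisibility relation to itself up to a nonzero scalar), I would first fix an affine coordinate system $(u,v)$ in which the common edge lies on the line $\{v=0\}$ and $v = c\,L_{ij}(x,y)$ for a nonzero constant $c$, with $u$ running along the edge. In these coordinates $g$ becomes $\widetilde g(u,v)\in\Pi_d$, and the lemma reduces to showing that $\widetilde g$ is divisible by $v^{\mu+1}$ exactly when $s$ is $C^{\mu}$ across the edge.

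For the forward direction, I would first argue that $C^{\mu}$-continuity of $s$ on $\overline{\phi_i\cup\phi_j}$ is equivalent to the vanishing, along the edge segment, of every partial derivative of $\widetilde g$ of total order at most $\mu$. Since the tangential derivatives of $\widetilde g$ are determined by its restriction to $\{v=0\}$, these conditions collapse to the pure normal derivatives: $\partial_v^{\,b}\widetilde g(u,0)=0$ for $b=0,1,\dots,\mu$ and all $u$ in the edge segment. The key step is then to upgrade this from the segment to the entire line: each $\partial_v^{\,b}\widetilde g(\cdot,0)$ is a univariate polynomial in $u$, and a polynomial vanishing on a nondegenerate interval vanishes identically, so the identities hold for all $u$. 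Expanding $\widetilde g$ in powers of $v$ as $\widetilde g(u,v)=\sum_{k=0}^{d} c_k(u)\,v^{k}$ gives $\partial_v^{\,b}\widetilde g(u,0)=b!\,c_b(u)$, whence $c_0\equiv\cdots\equiv c_\mu\equiv 0$ and $\widetilde g(u,v)=v^{\mu+1}\,\widetilde q(u,v)$. Since total degrees add under products, $\deg\widetilde q\le d-\mu-1$, and transforming back to $(x,y)$ yields $g = q_{ij}\,L_{ij}^{\mu+1}$ with $q_{ij}\in\Pi_{d-\mu-1}$, as claimed.

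The backward direction is the easy converse: given $s_i-s_j=q_{ij}\,L_{ij}^{\mu+1}$, the factor $v^{\mu+1}$ in the adapted coordinates forces $\partial_v^{\,b}\widetilde g(u,0)=0$ for every $b\le\mu$, so all partial derivatives of total order at most $\mu$ agree across the edge and $s\in C^{\mu}(\overline{\phi_i\cup\phi_j})$.

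I expect the main obstacle to be the careful justification of two points that are easy to gloss over: first, the equivalence between full $C^{\mu}$-continuity (all mixed partials) and the vanishing of the normal derivatives alone, which relies on both pieces being polynomials so that the tangential derivatives are slaved to the edge values; and second, the passage from vanishing on the finite edge segment to vanishing on the whole line, which is precisely what lets local smoothness force global divisibility. The remaining algebra — the power-series factorization and the degree bookkeeping giving $q_{ij}\in\Pi_{d-\mu-1}$ — is routine once the coordinate change is in place.
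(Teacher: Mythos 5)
Your proof is correct: the paper itself gives no proof of this lemma (it is quoted from \cite{wang2013multivariate} as a known result), and your argument is precisely the standard smoothing-cofactor proof found in that reference --- reduce to adapted coordinates where the edge lies on $\{v=0\}$, observe that $C^{\mu}$ matching collapses to vanishing of the pure normal derivatives $\partial_v^{\,b}\widetilde g(\cdot,0)$ for $b\le\mu$, extend from the edge segment to the whole line by the identity theorem for univariate polynomials, and read off divisibility by $v^{\mu+1}$ with the degree bound $d-\mu-1$. The two points you flag as delicate (tangential derivatives being slaved to edge values, and segment-to-line propagation) are exactly the right ones, and your handling of both is sound.
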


Lemma~\ref{lem continuous condition} provides algebraic conditions for continuity across adjacent cells. By specifying flow directions between all adjacent cells and selecting a source cell, a basis for the spline space over cross-cut partition and quasi-cross-cut partition are obtained by truncating the power function. Details are provided in \cite{wang2013multivariate}. Due to the complexity of the description, the construction is deferred to later sections and clarified through an example.

%%%%%%%%%%%%%%%%%%%%%%%%%%%%%%%%%%%%%%%%%%%%%%%%%%%%%%%%%%%%%%%%%%%%%%%%%

%%%%%%%%%%%%%%%%%%%%%%%%%%%%%%%%%%%%%%%%%%%%%%%%%%%%%%%%%%%%%%%%%%%%%%%%%%%
\section{Framework for Basis Function Construction}
In this section, we introduce the concept of a \textbf{Basis Construction-Suitable Partition} for spline spaces, enabling direct basis construction without edge extension or elimination. We apply the Extended Edge Elimination (EEE) conditions to this polygonal partition, establishing a framework for constructing basis functions for the spline space. To clarify the construction process, we first consider the one-dimensional case before addressing the two-dimensional case.

\subsection{Motivation: One-Dimensional Case}
In the one-dimensional case, the partition $\Delta$ corresponds to a knot vector with interior knots of multiplicity $d-\mu$ and boundary knots of multiplicity $d+1$. We define knot vector refinement as follows.

\begin{definition}~\cite{lyche2008spline}
A knot vector $\Delta_1$ is a refinement of $\Delta_2$ if every real number in $\Delta_2$ appears in $\Delta_1$.
\end{definition}

This leads to the following lemma.

\begin{lemma}\label{lem one-dimension}
If $\Delta_1$ is a refinement of $\Delta_2$, then:
\[
S_d^\mu(\Delta_2) \subseteq S_d^\mu(\Delta_1),
\]
where $\subseteq$ denotes a subspace relation.
\end{lemma}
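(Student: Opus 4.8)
The plan is to unwind both sides to the defining description of $S_d^{\mu}(\Delta)$ as piecewise polynomials of degree at most $d$ that are globally $C^{\mu}$, verify the set inclusion directly, and then note that the inclusion is automatically one of linear subspaces. First I would fix an arbitrary $s\in S_d^{\mu}(\Delta_2)$. To conclude $s\in S_d^{\mu}(\Delta_1)$ it suffices to check two conditions: that $s$ restricts to a polynomial of degree at most $d$ on every subinterval determined by $\Delta_1$, and that $s$ is $C^{\mu}$ at every interior breakpoint of $\Delta_1$.

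For the polynomial-piece condition I would invoke that $\Delta_1$ is a refinement of $\Delta_2$: every subinterval cut out by $\Delta_1$ is contained in a single subinterval cut out by $\Delta_2$. On that larger interval $s$ coincides with one polynomial of degree at most $d$, so it coincides with the same polynomial on the smaller interval, and the degree constraint is inherited.

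The heart of the argument is a case split at the interior breakpoints of $\Delta_1$. Each such breakpoint is either already a breakpoint of $\Delta_2$, in which case $s$ is $C^{\mu}$ there simply because $s\in S_d^{\mu}(\Delta_2)$; or it is a new knot introduced by the refinement, in which case it lies strictly inside some subinterval of $\Delta_2$, where $s$ agrees with a single polynomial and is therefore $C^{\infty}$, hence in particular $C^{\mu}$. In either case the smoothness requirement at that knot holds, so $s\in S_d^{\mu}(\Delta_1)$ and the set inclusion follows. Since $s$ was arbitrary, and since addition and scalar multiplication in both spaces are inherited from the common ambient space of functions on $\Omega$, the set inclusion is at once a linear subspace inclusion, which is the assertion.

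I do not expect a genuine obstacle here; the statement is essentially a bookkeeping fact about breakpoints. The only point that deserves care is the treatment of the newly inserted knots: one must confirm that refinement never imposes a smoothness demand at a location where $s$ was only piecewise defined relative to $\Delta_2$, and indeed every new knot falls in the interior of a smooth polynomial piece of $s$, so no additional constraint can be violated. (Phrased through the knot-vector picture, inserting a knot of multiplicity at most $d-\mu$ into the interior of an existing piece cannot raise the smoothness requirement beyond $C^{\mu}$, which $s$ already exceeds there.)
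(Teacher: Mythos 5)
Your proof is correct: the paper states this lemma without any proof (it is presented as an immediate consequence of the definition of knot-vector refinement), and your argument---polynomial pieces are inherited because each $\Delta_1$-subinterval sits inside a $\Delta_2$-subinterval, smoothness at old knots comes from $s\in S_d^\mu(\Delta_2)$, and smoothness at newly inserted knots is automatic since $s$ is a single polynomial (hence $C^\infty$) there---is precisely the standard bookkeeping the paper implicitly relies on. Nothing is missing; the closing remark about new knots never imposing a constraint beyond $C^\mu$ is the right point of care.
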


The follow corollary establish immediately.

\begin{corollary}\label{cor one-dimension represent}
$\forall s(x) \in S_d^\mu(\Delta_2)$ can be expressed as:
\[
s(x) = \sum c_i B_{i,\Delta_1}(x),
\]
where $B_{i,\Delta_1}(x)$ are B-splines defined by $\Delta_1$.
\end{corollary}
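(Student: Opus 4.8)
The plan is to deduce the corollary directly from Lemma~\ref{lem one-dimension} together with the classical fact that the B-splines associated with a knot vector form a basis for the corresponding univariate spline space. First I would invoke Lemma~\ref{lem one-dimension}: since $\Delta_1$ is a refinement of $\Delta_2$, we have the subspace inclusion $S_d^\mu(\Delta_2) \subseteq S_d^\mu(\Delta_1)$. Hence any $s(x) \in S_d^\mu(\Delta_2)$ is in particular an element of the larger space $S_d^\mu(\Delta_1)$.

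The second and essential ingredient is that the family $\{B_{i,\Delta_1}(x)\}$ is not merely a linearly independent set but spans all of $S_d^\mu(\Delta_1)$. This is the standard B-spline basis theorem for univariate splines on a knot vector with the prescribed interior multiplicities $d-\mu$ and boundary multiplicity $d+1$ (see \cite{lyche2008spline}): the number of such B-splines equals $\dim S_d^\mu(\Delta_1)$, and their linear independence then forces them to be a basis. I would state this explicitly as the bridge between the inclusion and the desired expansion.

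Combining the two steps finishes the argument: because $s \in S_d^\mu(\Delta_1)$ and the $B_{i,\Delta_1}$ form a basis of that space, there exist coefficients $c_i$ with $s(x) = \sum_i c_i B_{i,\Delta_1}(x)$, which is exactly the claimed representation.

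The proof is essentially immediate once the inclusion is in hand, so there is no genuine obstacle to overcome; the only point requiring care is to make sure the spanning property of the refined B-splines is cited correctly for the specific multiplicity pattern of the knot vector, rather than merely their linear independence, since the corollary asserts a representation of \emph{every} element of $S_d^\mu(\Delta_2)$, not just membership in the span of a subfamily.
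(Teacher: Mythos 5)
Your proposal is correct and follows exactly the route the paper intends: the paper states the corollary as "establishing immediately" from Lemma~\ref{lem one-dimension}, i.e., the subspace inclusion $S_d^\mu(\Delta_2) \subseteq S_d^\mu(\Delta_1)$ combined with the classical fact that the B-splines on $\Delta_1$ form a basis of $S_d^\mu(\Delta_1)$. Your only addition is to make explicit the spanning property of the B-splines (the Curry--Schoenberg basis theorem), which the paper leaves implicit but clearly relies on.
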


This study aims to construct a basis for the spline space $S_d^{\mu}(\Delta_2)$ from $S_d^{\mu}(\Delta_1)$, addressing the following problem:
\begin{equation}
\textbf{Construct a basis for } S_d^{\mu}(\Delta_2) \textbf{ from } S_d^{\mu}(\Delta_1).
\end{equation}

A key observation is that the primary distinction between $S_d^{\mu}(\Delta_1)$ and $S_d^{\mu}(\Delta_2)$ lies in the smoothness condition: in $S_d^{\mu}(\Delta_2)$, the knots in $\Delta_1 \setminus \Delta_2$ are $C^\infty$. This leads to the following lemma for $S_d^{\mu}(\Delta_2)$.

\begin{lemma}\label{lem one-dimension C^inf}
For $s(x) \in S_d^\mu(\Delta_2)$, being $C^\infty$ at knots in $\Delta_1 \setminus \Delta_2$ is equivalent to being $C^d$ at those knots.
\end{lemma}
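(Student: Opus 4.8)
The plan is to prove the statement by a direct local analysis at a single knot $t \in \Delta_1 \setminus \Delta_2$ and then observe that smoothness at distinct knots is independent, so it suffices to treat one such knot. The key point is that $s(x) \in S_d^\mu(\Delta_2)$ is \emph{already} a polynomial of degree at most $d$ on each cell of $\Delta_2$, and the knots of $\Delta_1 \setminus \Delta_2$ lie in the \emph{interior} of cells of $\Delta_2$. First I would fix such a knot $t$ and let $\phi$ be the cell of $\Delta_2$ containing $t$ in its interior. Since $s|_\phi \in \Pi_d$, the function $s$ restricted to a neighborhood of $t$ is a single polynomial of degree at most $d$ on each side of $t$ (in fact, on both sides it is the \emph{same} polynomial, the restriction $s|_\phi$).

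The heart of the argument is the equivalence of the two smoothness conditions, which I would establish by showing each polynomial-degree bound forces the same local behavior. For the easy direction, being $C^d$ at $t$ trivially implies being $C^\infty$ at $t$ is \emph{not} what we want; rather I would argue $C^d \Rightarrow C^\infty$ using that $s$ is piecewise in $\Pi_d$: if the left and right polynomial pieces $p_-, p_+ \in \Pi_d$ agree to order $d$ at $t$ (i.e. $p_-^{(k)}(t) = p_+^{(k)}(t)$ for $0 \le k \le d$), then since both have degree at most $d$ they are determined by their $d+1$ derivatives at $t$, hence $p_- \equiv p_+$ as polynomials, giving $C^\infty$ smoothness. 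The converse direction, $C^\infty \Rightarrow C^d$, is immediate since $C^\infty$ at a point certainly implies $C^d$ there. Thus the crux is the observation that matching $d+1$ derivatives of two degree-$d$ polynomials forces equality.

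The main obstacle, and the step requiring the most care, is justifying that the relevant knots of $\Delta_1 \setminus \Delta_2$ genuinely lie in cell interiors of $\Delta_2$ so that only a \emph{single} polynomial piece of $s$ is involved on each side, making the Taylor/derivative-matching argument valid. This follows from the definition of refinement: every knot of $\Delta_2$ appears in $\Delta_1$, so a knot in $\Delta_1 \setminus \Delta_2$ cannot be a knot of $\Delta_2$ and therefore lies strictly inside a cell $\phi \in \Delta_2$ where $s|_\phi \in \Pi_d$. I would also note that one must handle multiplicities correctly, but since $s|_\phi$ is a genuine polynomial (infinitely differentiable) across any interior point of $\phi$, the argument is unaffected by how $\Delta_1$ assigns multiplicity to $t$; the constraint "$C^\infty$ at $t$" is automatically satisfied by $s|_\phi$, so the content of the lemma is the tautology that for a function already piecewise-$\Pi_d$, imposing $C^d$ at such an interior knot is the same as imposing full smoothness there. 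Once this localization is set up, the remaining computation is the elementary polynomial fact above, and no further technical machinery is needed.
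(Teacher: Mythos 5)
Your proof is correct: the paper states this lemma without any proof (treating it as immediate from the ``key observation'' that the pieces of $s$ are polynomials of degree at most $d$), and your argument---that a knot $t \in \Delta_1 \setminus \Delta_2$ lies in the interior of a cell of $\Delta_2$, and that two polynomials in $\Pi_d$ whose derivatives agree up to order $d$ at $t$ must coincide, so $C^d$ forces $C^\infty$---is precisely the standard justification the paper implicitly relies on. Your closing remark is also accurate: as literally stated (for $s$ already in $S_d^\mu(\Delta_2)$) the lemma is tautological, and its substantive content only appears when $s$ is regarded as an element of $S_d^\mu(\Delta_1)$ with possibly distinct one-sided polynomial pieces, which is exactly how the paper uses it in the subsequent lemmas leading to the EEE condition.
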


An equivalent formulation of Lemma~\ref{lem one-dimension C^inf} is given below.

\begin{lemma}\label{lem one-dimension C^d}
For $s(x) \in S_d^\mu(\Delta_2)$, being $C^d$ at knots in $\Delta_1 \setminus \Delta_2$ is equivalent to
\begin{equation}
\left. \frac{\partial^r_{+}}{\partial x^r} s(x) \right|_{x \in \Delta_1 \setminus \Delta_2} = \left. \frac{\partial^r_{-}}{\partial x^r} s(x) \right|_{x \in \Delta_1 \setminus \Delta_2}\quad r=\mu+1,\ldots,d.
\end{equation}
\end{lemma}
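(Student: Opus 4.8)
The plan is to unwind the pointwise $C^d$ condition into a finite system of one-sided derivative equalities and then discard the equalities that are already automatic. Fix a knot $x_0 \in \Delta_1 \setminus \Delta_2$. By Corollary~\ref{cor one-dimension represent} every $s$ under consideration lies in $S_d^\mu(\Delta_1)$, so $s$ is piecewise polynomial of degree at most $d$ and, in a neighborhood of $x_0$, coincides with a single polynomial $p_-$ of degree $\le d$ to the left of $x_0$ and with a single polynomial $p_+$ of degree $\le d$ to the right. Since each piece is itself $C^\infty$, the one-sided derivatives $\partial^r_{-} s(x_0)=p_-^{(r)}(x_0)$ and $\partial^r_{+} s(x_0)=p_+^{(r)}(x_0)$ exist for every $r$, and by the definition of continuity of derivatives, ``$s$ is $C^d$ at $x_0$'' is equivalent to the finite system
\[
\partial^r_{-} s(x_0) = \partial^r_{+} s(x_0), \qquad r = 0,1,\ldots,d.
\]

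Next I would observe that the low-order equalities are free. Because $s \in S_d^\mu(\Delta_1)$ and $x_0$ is a knot of $\Delta_1$, the function $s$ is $C^\mu$ at $x_0$ by definition of the ambient space; hence the equalities with $r = 0,1,\ldots,\mu$ hold automatically and carry no information. The $C^d$ condition therefore reduces to the remaining equalities with $r = \mu+1,\ldots,d$, which is precisely the displayed identity in the statement. Both implications are then immediate: if $s$ is $C^d$ at $x_0$ then in particular the derivatives of orders $\mu+1,\ldots,d$ match; conversely, the automatically-matched orders $0,\ldots,\mu$ together with the assumed orders $\mu+1,\ldots,d$ exhaust $r=0,\ldots,d$, giving $C^d$. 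Since $x_0 \in \Delta_1\setminus\Delta_2$ was arbitrary, the equivalence holds simultaneously at all such knots.

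I do not anticipate a genuine obstacle here: the statement is an equivalent reformulation of Lemma~\ref{lem one-dimension C^inf}, and the argument is essentially a definitional unwinding. The only point demanding care is the reduction step—justifying that the orders $0,\ldots,\mu$ contribute nothing—which rests on correctly invoking the $C^\mu$ smoothness of the ambient space $S_d^\mu(\Delta_1)$ at the knot $x_0$, and on reading ``$C^d$ at a point'' for a piecewise polynomial as the matching of one-sided derivatives up to order $d$. Combined with Lemma~\ref{lem one-dimension C^inf}, this yields a concrete, checkable linear system on the B-spline coefficients of Corollary~\ref{cor one-dimension represent} that characterizes membership in $S_d^\mu(\Delta_2)$ inside $S_d^\mu(\Delta_1)$, which is exactly what the basis-construction framework requires.
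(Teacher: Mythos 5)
Your proof is correct: the paper itself offers no argument for this lemma, presenting it as a definitional reformulation of Lemma~\ref{lem one-dimension C^inf}, and your unwinding---matching one-sided derivatives of orders $0,\ldots,d$ at each extra knot, with orders $0,\ldots,\mu$ automatic from the $C^\mu$ smoothness of the ambient space $S_d^\mu(\Delta_1)$---is precisely the justification the paper leaves implicit. The one point worth keeping explicit, which you do handle, is that for piecewise polynomials of degree at most $d$ the pointwise $C^d$ condition is legitimately equivalent to the finite system of one-sided derivative equalities, since each piece is itself smooth.
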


Combining Lemma~\ref{lem one-dimension C^d} with Corollary~\ref{cor one-dimension represent}, we derive the following one-dimensional EEE condition.

\begin{lemma}
For $s(x) \in S_d^\mu(\Delta_2)$, being $C^d$ at knots in $\Delta_1 \setminus \Delta_2$ is equivalent to
\begin{equation}\label{eq one-dimension EEE}
\left. \sum c_i \left( \frac{\partial^r_{+}}{\partial x^r} B_{i,\Delta_1}(x) - \frac{\partial^r_{-}}{\partial x^r} B_{i,\Delta_1}(x) \right) \right|_{x \in \Delta_1 \setminus \Delta_2} = 0 \quad r=\mu+1,\ldots,d,
\end{equation}
which forms a linear system for the coefficients $c_i$.
\end{lemma}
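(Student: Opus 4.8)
The plan is to chain together the three results that immediately precede this statement, so that the proof is essentially a substitution argument. By Lemma~\ref{lem one-dimension C^inf}, being $C^\infty$ at the knots in $\Delta_1\setminus\Delta_2$ is the same as being $C^d$ there, and by Lemma~\ref{lem one-dimension C^d} the latter is equivalent to the matching of one-sided derivatives of orders $r=\mu+1,\ldots,d$ at each such knot. The only remaining ingredient is Corollary~\ref{cor one-dimension represent}, which guarantees that every $s(x)\in S_d^\mu(\Delta_2)$ admits a representation $s(x)=\sum_i c_i B_{i,\Delta_1}(x)$ in the refined B-spline basis. So the heart of the argument is to insert this representation into the derivative-matching condition of Lemma~\ref{lem one-dimension C^d} and push the one-sided differential operators through the finite sum.

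First I would invoke Corollary~\ref{cor one-dimension represent} to write $s(x)=\sum_i c_i B_{i,\Delta_1}(x)$, noting that the sum is finite and that each $B_{i,\Delta_1}$ is piecewise polynomial, so the one-sided derivatives $\partial_{\pm}^r$ exist at every knot of $\Delta_1$. Next I would observe that both $\partial_+^r$ and $\partial_-^r$ are linear operators, so they commute with the finite linear combination:
\begin{equation*}
\frac{\partial_{\pm}^r}{\partial x^r}s(x)=\sum_i c_i\,\frac{\partial_{\pm}^r}{\partial x^r}B_{i,\Delta_1}(x).
\end{equation*}
Substituting these two expressions into the equality of Lemma~\ref{lem one-dimension C^d} and subtracting, the coefficients $c_i$ factor out of the difference, yielding precisely \eqref{eq one-dimension EEE}. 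Evaluating at each $x\in\Delta_1\setminus\Delta_2$ for each order $r=\mu+1,\ldots,d$ then produces one scalar equation per (knot, order) pair, which together form the asserted homogeneous linear system in the unknowns $c_i$.

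Since the equivalences in Lemma~\ref{lem one-dimension C^inf} and Lemma~\ref{lem one-dimension C^d} are already established, the logical content reduces to the linearity step, and the proof is short. The main point requiring care is making the one-sided derivative manipulation rigorous: one must confirm that interchanging $\partial_{\pm}^r$ with the summation is legitimate, which holds because the sum is finite and each basis function is smooth on each side of every knot, so the one-sided limits defining $\partial_{\pm}^r B_{i,\Delta_1}$ are well defined and the limit of a finite sum is the sum of the limits. I would also remark that the equivalence is bidirectional: the coefficient representation is unique in the B-spline basis of $\Delta_1$, so \eqref{eq one-dimension EEE} holding for all relevant $r$ and all knots is not merely implied by but actually characterizes the $C^d$ (hence $C^\infty$) condition, which is what makes the resulting linear system a faithful encoding of membership in $S_d^\mu(\Delta_2)$ and motivates calling it the one-dimensional EEE condition.
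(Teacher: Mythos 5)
Your proposal is correct and follows essentially the same route as the paper, which states this lemma as the direct combination of Corollary~\ref{cor one-dimension represent} (the B-spline representation over $\Delta_1$) with Lemma~\ref{lem one-dimension C^d} (one-sided derivative matching), with linearity of the one-sided derivative operators doing the substitution work. Your added care about pushing $\partial_{\pm}^r$ through the finite sum and the uniqueness remark for bidirectionality only make explicit what the paper leaves implicit.
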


The coefficients $c_i$ satisfying equation~\eqref{eq one-dimension EEE} ensure that $\sum c_i B_{i,\Delta_1}(x)$ is $C^d$ at $\Delta_1 \setminus \Delta_2$, and thus $C^\infty$. Consequently, $\sum c_i B_{i,\Delta_1}(x) \in S_d^\mu(\Delta_2)$, providing a basis for $S_d^\mu(\Delta_2)$.

\begin{theorem}
Let the linear system~\eqref{eq one-dimension EEE} have $h$ basis solutions $\boldsymbol{c}_j = (c_{1,j}, \ldots, c_{n,j})$ for $j=1,2,\ldots,h$, where $n = \dim S_d^\mu(\Delta_1)$. Then the set
\begin{equation}
\left\{ \sum c_{i,j} B_{i,\Delta_1}(x) \right\}_{j=1}^h
\end{equation}
forms a basis for $S_d^\mu(\Delta_2)$.
\end{theorem}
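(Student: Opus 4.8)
The plan is to realize the claimed set as the image, under the B-spline coordinate isomorphism, of a basis of the solution space of the linear system~\eqref{eq one-dimension EEE}, and then to invoke the elementary fact that a linear isomorphism carries a basis of a subspace onto a basis of its image. Thus the whole argument reduces to identifying $S_d^\mu(\Delta_2)$, viewed inside $S_d^\mu(\Delta_1)$, with the solution space of~\eqref{eq one-dimension EEE} in coordinates.

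First I would fix the coordinate map $\Phi\colon \mathbb{R}^n \to S_d^\mu(\Delta_1)$ given by $\Phi(\boldsymbol{c}) = \sum_{i=1}^n c_i B_{i,\Delta_1}(x)$. Because $\{B_{i,\Delta_1}\}_{i=1}^n$ is a basis of $S_d^\mu(\Delta_1)$ with $n = \dim S_d^\mu(\Delta_1)$, the map $\Phi$ is a linear isomorphism; in particular it is injective, so distinct (and linearly independent) coefficient vectors map to linearly independent spline functions. This handles the independence half of the conclusion once we know the basis vectors of the solution space are carried into $S_d^\mu(\Delta_2)$.

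The central step, which I expect to be the main obstacle, is the biconditional: for $\boldsymbol{c}\in\mathbb{R}^n$ one has $\Phi(\boldsymbol{c})\in S_d^\mu(\Delta_2)$ if and only if $\boldsymbol{c}$ solves~\eqref{eq one-dimension EEE}. Here I would argue that, since $\Delta_2\subseteq\Delta_1$, every $\Phi(\boldsymbol{c})\in S_d^\mu(\Delta_1)$ is already piecewise of degree $d$ on $\Delta_2$ and already $C^\mu$ at the knots of $\Delta_2$, so the only remaining requirement for membership in $S_d^\mu(\Delta_2)$ is smoothness across the deleted knots $\Delta_1\setminus\Delta_2$. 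By Lemma~\ref{lem one-dimension C^inf} this $C^\infty$ requirement is equivalent to the $C^d$ requirement, by Lemma~\ref{lem one-dimension C^d} the latter is the equality of one-sided derivatives of orders $\mu+1,\dots,d$ at $\Delta_1\setminus\Delta_2$, and substituting the expansion $\Phi(\boldsymbol{c})=\sum_i c_i B_{i,\Delta_1}$ turns this into precisely~\eqref{eq one-dimension EEE}. The care required is to check that nothing is lost in the ``only if'' direction: every $s\in S_d^\mu(\Delta_2)$ does arise as $\Phi(\boldsymbol{c})$ for a unique $\boldsymbol{c}$ by Corollary~\ref{cor one-dimension represent} together with the uniqueness of B-spline coefficients, and that unique $\boldsymbol{c}$ must then satisfy the jump equations. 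This establishes $\Phi^{-1}\!\left(S_d^\mu(\Delta_2)\right) = W$, where $W$ denotes the solution space of~\eqref{eq one-dimension EEE}.

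Finally I would assemble the pieces. The restriction $\Phi|_W\colon W \to S_d^\mu(\Delta_2)$ is a linear isomorphism: it is injective because $\Phi$ is, and it is surjective by the ``only if'' direction just proved. Since $\boldsymbol{c}_1,\dots,\boldsymbol{c}_h$ is a basis of $W$ (here $h=\dim W$ is the number of basis solutions), their images $\left\{\sum_i c_{i,j}B_{i,\Delta_1}(x)\right\}_{j=1}^h$ form a basis of $S_d^\mu(\Delta_2)$, which is the desired conclusion. As a consistency check one recovers $\dim S_d^\mu(\Delta_2)=h$, matching the nullity of the system~\eqref{eq one-dimension EEE}.
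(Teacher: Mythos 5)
Your proof is correct and follows essentially the same route as the paper: the paper (which spells out the argument in detail for the two-dimensional analogue, Theorem~\ref{thm basis construction}) likewise establishes spanning by combining Corollary~\ref{cor one-dimension represent} with the equivalence between membership in $S_d^\mu(\Delta_2)$ and the EEE system via Lemmas~\ref{lem one-dimension C^inf} and~\ref{lem one-dimension C^d}, and establishes linear independence from the basis property of the $B_{i,\Delta_1}$ together with the linear independence of the solution vectors. Your packaging of these two steps as the statement that the coordinate isomorphism $\Phi$ restricts to an isomorphism $W \to S_d^\mu(\Delta_2)$ is only a cosmetic reorganization of the same argument.
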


This provides the foundation for constructing a basis for the refined spline space $S_d^{\mu}(\Delta_2)$ from the coarser spline space $S_d^{\mu}(\Delta_1)$.

\subsection{Two-Dimensional Case}

In \cite{zhong2025basisconstructionpolynomialspline}, a basis construction method for spline spaces over diagonalizable T-meshes is introduced, followed by the use of EEE conditions to construct bases for spline spaces over arbitrary T-meshes. This section generalizes the basis construction for diagonalizable T-meshes, introducing the concept of Basis Construction-Suitable Partition. We then clarify the relationship and distinctions between Dimensional-Stable Partitions and Basis Construction-Suitable Partitions. Finally, we extend the EEE conditions to arbitrary polygonal partitions, enabling basis construction for spline spaces over such partitions. Before introducing the main result, we examine the extended polygonal partition, denoted $\mathrm{ext}_s(\Delta)$, as follows.

\begin{definition}
    For a polygonal partition $\Delta$, the extended partition $\mathrm{ext}_s(\Delta)$ is formed by extending the edges of $\Delta$, satisfying $\Delta \subseteq \mathrm{ext}_s(\Delta)$, where $s$ denotes the number of edges extended from $\Delta$ to $\mathrm{ext}_s(\Delta)$.
\end{definition}

For any polygonal partition $\Delta$, the maximal extension forms a cross-cut partition, termed \textbf{the extended cross-cut partition of $\Delta$}. The following lemma obviously holds.

\begin{lemma}\label{lem ascending chain}
    For the ascending chain:
    $$\mathrm{ext}_0(\De):=\De\subseteq \mathrm{ext}_1(\De)\subseteq \mathrm{ext}_2(\De)\subseteq\cdots\subseteq\mathrm{ext}_{s}(\De):=\De_c.$$
    where $\De_c$ is the extended cross-cut partition of $\De$. Then we have
    $$ S_{d}^{\mu}\left(\mathrm{ext}_0(\De)\right)\subseteq  S_{d}^{\mu}\left(\mathrm{ext}_1(\De)\right)\subseteq\cdots\subseteq  S_{d}^{\mu}\left(\mathrm{ext}_{s}(\De)\right).$$
    where '$\subseteq$' denotes a subspace relation.
\end{lemma}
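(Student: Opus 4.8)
The plan is to reduce the chain of subspace inclusions to a single generic step and then apply it repeatedly along the chain. It suffices to show that for any two consecutive partitions in the chain, say $\mathrm{ext}_k(\De)$ and $\mathrm{ext}_{k+1}(\De)$ with $\mathrm{ext}_k(\De)\subseteq \mathrm{ext}_{k+1}(\De)$, we have the subspace inclusion $S_d^\mu(\mathrm{ext}_k(\De))\subseteq S_d^\mu(\mathrm{ext}_{k+1}(\De))$; the full chain then follows by transitivity of $\subseteq$. This mirrors the one-dimensional situation of Lemma~\ref{lem one-dimension}, where refining a knot vector enlarges the spline space, so I would frame the two-dimensional extension as the natural geometric analogue of knot insertion.

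First I would fix an arbitrary $s\in S_d^\mu(\mathrm{ext}_k(\De))$ and verify the two defining properties of membership in $S_d^\mu(\mathrm{ext}_{k+1}(\De))$ directly from the definition of the spline space. For the piecewise-polynomial condition: because $\mathrm{ext}_{k+1}(\De)$ is obtained from $\mathrm{ext}_k(\De)$ by extending edges, every cell $\phi'$ of the finer partition $\mathrm{ext}_{k+1}(\De)$ is contained in a single cell $\phi$ of the coarser partition $\mathrm{ext}_k(\De)$. Hence $s|_{\phi'}$ is the restriction of the polynomial $s|_{\phi}\in\Pi_d$ to $\phi'$, so $s|_{\phi'}\in\Pi_d$, and the piecewise-polynomial requirement is inherited automatically. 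For the smoothness condition: since the partitions share the same underlying domain $\Omega$ and $s$ already lies in $C^\mu(\Omega)$, global $C^\mu$ smoothness is a property of the function $s$ itself and does not depend on which partition we view it over; it is therefore preserved verbatim. Combining these two observations gives $s\in S_d^\mu(\mathrm{ext}_{k+1}(\De))$.

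The fact that the inclusion is genuinely a \emph{subspace} relation, rather than mere set containment, is immediate: $S_d^\mu(\mathrm{ext}_k(\De))$ and $S_d^\mu(\mathrm{ext}_{k+1}(\De))$ are both linear subspaces of $C^\mu(\Omega)$ under the usual pointwise operations, and the set inclusion just established respects addition and scalar multiplication, so no separate argument for linearity is needed. Applying the single-step inclusion to each link $\mathrm{ext}_0(\De)\subseteq\cdots\subseteq\mathrm{ext}_s(\De)=\De_c$ and chaining the results yields the asserted nested sequence of spline spaces.

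I expect the argument to be essentially routine, which matches the author's remark that the lemma ``obviously holds.'' The only point requiring genuine care — and the step I would treat as the main obstacle — is the claim that each finer cell $\phi'$ lies inside a single coarser cell $\phi$. This must be extracted from the precise meaning of ``extending edges'' in the definition of $\mathrm{ext}_s(\Delta)$: edge extension subdivides existing cells without merging or re-cutting them across coarse-cell boundaries, so the cell structure of $\mathrm{ext}_{k+1}(\De)$ refines that of $\mathrm{ext}_k(\De)$. Once this refinement property of the cell complex is granted, both the polynomial-restriction and smoothness arguments follow with no further calculation.
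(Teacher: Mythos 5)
Your proof is correct; the paper offers no argument at all for this lemma (it is stated as "obviously holds"), and your write-up supplies exactly the standard justification the authors take for granted: each cell of $\mathrm{ext}_{k+1}(\De)$ lies in a single cell of $\mathrm{ext}_k(\De)$ since edge extension only adds edges, so the piecewise-polynomial condition is inherited by restriction, while $C^\mu(\Omega)$ smoothness is a partition-independent property of the function. You also correctly flag the one point of substance — the cell-refinement property — and note that the subspace (rather than mere set) inclusion is automatic for subspaces of $C^\mu(\Omega)$, so nothing further is needed.
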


We now define the Basis Construction-Suitable Partition (or Basis Construction-Suitable Mesh), which enables direct construction of basis functions for the spline space $S_d^{\mu}(\De)$ and serves as a core component in the basis construction framework.

\begin{definition}\label{def CSP}
    A polygonal partition $\Delta$ is termed a \emph{Basis Construction-Suitable Partition} if:
    \begin{itemize}
        \item The dimension of the spline space $S_d^{\mu}(\Delta)$ is stable.
        \item A basis for $S_d^{\mu}(\Delta)$ can be constructed \textbf{directly} on $\Delta$ \textbf{without extending its edges.}
    \end{itemize}
\end{definition}

Accordingly, we define the concept of \textbf{Dimensional-Stable Partition (or Dimensional-Stable Mesh)} as follow.

\begin{definition}\label{def DSP}
     A polygonal partition $\Delta$ is termed a Dimensional-Stable Partition if the dimension of the spline space $S_d^{\mu}(\Delta)$ is stable.
\end{definition}

By Definitions~\ref{def CSP} and~\ref{def DSP}, every Basis Construction-Suitable Partition is a Dimensional-Stable Partition, but the converse is not true. The Basis Construction-Suitable Partition depends on the basis construction method. A Dimensional-Stable Partition may form a Basis Construction-Suitable Partition under one construction method but not under another. Whatever, the dimensional stability condition is necessary.

\begin{example}~\cite{zhong2025basisconstructionpolynomialspline}
Consider the T-mesh $\De$ with a T-cycle, as shown in the left subfigure of Figure~\ref{fig:CSP & DSP}. For the spline space $S_{2,2}^{1,1}(\De)$, the dimension is stable. We construct basis functions for $S_{2,2}^{1,1}(\De)$ using two methods:
    \begin{itemize}
    \item \textbf{U-spline method}~\cite{thomas2022u}: Basis functions are constructed by forming linear systems from smoothness constraints on the Bézier coordinates of each cell and solving these systems. 
    
    For the T-mesh $\De$ depicted in the left subfigure of Figure~\ref{fig:CSP & DSP}, this method directly yields a basis for $S_{2,2}^{1,1}(\De)$, confirming that $\De$ is a Basis Construction-Suitable Partition.

    \item \textbf{PT-spline method}~\cite{zhong2025basisconstructionpolynomialspline}: Basis functions are constructed by extending edges in the T-mesh to form an extended T-mesh $\mathrm{ext}_s(\De)$ that supports local tensor product B-splines, satisfying the dimension formula for $S_{\boldsymbol{d}}^{\boldsymbol{\mu}}(\mathrm{ext}_s(\De))$. The Extended Edge Elimination Condition is then applied to obtain a basis for $S_{\boldsymbol{d}}^{\boldsymbol{\mu}}(\mathrm{ext}_s(\De))$. 
    
    For the T-mesh $\De$ depicted in the left subfigure of Figure~\ref{fig:CSP & DSP}, edge extensions are necessary to satisfy the PT-spline construction conditions, as depicted in the right subfigure of Figure~\ref{fig:CSP & DSP}. Consequently, the extended T-mesh $\mathrm{ext}_2(\De)$ forms a Basis Construction-Suitable Partition. See Example 11 in~\cite{zhong2025basisconstructionpolynomialspline} for details.
\end{itemize}
\end{example}
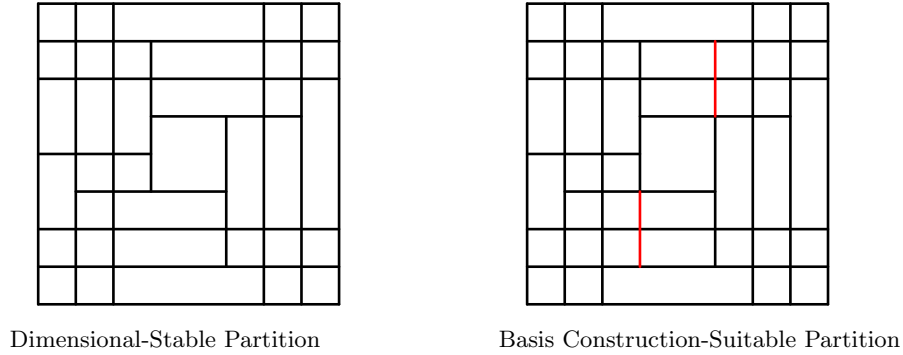
\begin{figure}
    \centering
    \begin{tikzpicture}[line cap=round,line join=round,>=triangle 45,x=0.5cm,y=.5cm,scale=1]
\clip(-4.66,-2.16) rectangle (25.16,12.84);
\draw [line width=1.pt] (0.,1.)-- (8.,1.);
\draw [line width=1.pt] (0.,1.)-- (0.,9.);
\draw [line width=1.pt] (0.,9.)-- (8.,9.);
\draw [line width=1.pt] (8.,9.)-- (8.,1.);
\draw [line width=1.pt] (1.,1.)-- (1.,9.);
\draw [line width=1.pt] (2.,9.)-- (2.,1.);
\draw [line width=1.pt] (7.,9.)-- (7.,1.);
\draw [line width=1.pt] (6.,1.)-- (6.,9.);
\draw [line width=1.pt] (0.,8.)-- (8.,8.);
\draw [line width=1.pt] (0.,7.)-- (8.,7.);
\draw [line width=1.pt] (3.,8.)-- (3.,4.);
\draw [line width=1.pt] (1.,4.)-- (5.,4.);
\draw [line width=1.pt] (0.,2.)-- (8.,2.);
\draw [line width=1.pt] (8.,3.)-- (0.,3.);
\draw [line width=1.pt] (5.,2.)-- (5.,6.);
\draw [line width=1.pt] (3.,6.)-- (7.,6.);
\draw [line width=1.pt] (3.,5.)-- (0.,5.);
\draw [line width=1.pt] (13.,9.)-- (13.,1.);
\draw [line width=1.pt] (13.,1.)-- (21.,1.);
\draw [line width=1.pt] (21.,1.)-- (21.,9.);
\draw [line width=1.pt] (21.,9.)-- (13.,9.);
\draw [line width=1.pt] (13.,8.)-- (21.,8.);
\draw [line width=1.pt] (13.,7.)-- (21.,7.);
\draw [line width=1.pt] (14.,9.)-- (14.,1.);
\draw [line width=1.pt] (15.,1.)-- (15.,9.);
\draw [line width=1.pt] (13.,2.)-- (21.,2.);
\draw [line width=1.pt] (13.,3.)-- (21.,3.);
\draw [line width=1.pt] (20.,1.)-- (20.,9.);
\draw [line width=1.pt] (19.,9.)-- (19.,1.);
\draw [line width=1.pt] (16.,8.)-- (16.,4.);
\draw [line width=1.pt] (14.,4.)-- (18.,4.);
\draw [line width=1.pt] (18.,2.)-- (18.,6.);
\draw [line width=1.pt] (20.,6.)-- (16.,6.);
\draw [line width=1.pt] (16.,5.)-- (13.,5.);
\draw [line width=1.pt,color=red] (16.,4.)-- (16.,2.);
\draw [line width=1.pt,color=red] (18.,6.)-- (18.,8.);
\draw (-1,0.54) node[anchor=north west] {\small Dimensional-Stable\ Partition};
\draw (12,0.54) node[anchor=north west] {\small Basis Construction-Suitable\ Partition};
\end{tikzpicture}
    \caption{\label{fig:CSP & DSP}Comparison of Dimensional-Stable Partition and Basis Construction-Suitable Partition}
\end{figure}

\begin{definition}
For an arbitrary polygonal partition $\Delta$, let $\mathcal{M}$ denote a basis construction method. The pair $(\Delta, \mathcal{M})$ is \textbf{compatible} if $\Delta$ is a Basis Construction-Suitable partition under $\mathcal{M}$.
\end{definition}

Many partitions studied for constructing spline basis functions are Basis Construction-Suitable Partitions, enabling direct basis construction. In the T-mesh category, these include hierarchical meshes with mild restrictions for HB-splines~\cite{HB3}, AS T-meshes for AS T-splines~\cite{ast,li2014analysis}, restricted LR-meshes for LR-splines~\cite{patrizi2020adaptive}, and hierarchical T-meshes for PHT-splines~\cite{PHT}. In the triangulation category, examples include Clough-Tocher (CT) splits~\cite{mingjunlai2007,sablonniere1987composite} for $S_d^2(\Delta_{\mathrm{CT}})$ with $d \geq 7$ and Powell-Sabin (PS) 6 and 12 splits~\cite{powell1977piecewise,alfeld2002smooth,schumaker2006smooth,mingjunlai2007,speleers2013construction} for $S_d^2(\Delta_{\mathrm{PS}})$ with $d \geq 5$ etc.

\begin{example}
Consider a hierarchical T-mesh $\Delta$ shown in Figure~\ref{fig:HB PT PHT} and the basis construction methods for HB-spline~\cite{HB2} and PHT-spline~\cite{PHT}, denoted $\mathcal{M}_{\text{HB}}$ and $\mathcal{M}_{\text{PHT}}$, respectively. The compatibility of these methods with $\Delta$ for different spline spaces is summarized in Table~\ref{tab:compatibility}.

\begin{table}[ht]
\centering
\caption{Compatibility of construction methods with $\Delta$}
\label{tab:compatibility}
\begin{tabular}{|c|c|c|}
\hline
Spline Space & $\mathcal{M}_{\text{HB}}$ & $\mathcal{M}_{\text{PHT}}$ \\
\hline
$S_2^1(\Delta)$ & Not compatible & Not compatible \\
$S_3^1(\Delta)$ & Not compatible & Compatible \\
\hline
\end{tabular}
\end{table}

The pair $(\Delta, \mathcal{M}_{\text{HB}})$ is not compatible because $\Delta$ does not form a completeness hierarchical B-spline mesh~\cite{HB3}. For $S_2^1(\Delta)$, $(\Delta, \mathcal{M}_{\text{PHT}})$ is not compatible as the smoothness and degree do not satisfy PHT-spline requirements, which mandate smoothness less than half the degree. For $S_3^1(\Delta)$, $(\Delta, \mathcal{M}_{\text{PHT}})$ is compatible since $\Delta$ is a hierarchical T-mesh and the smoothness meets PHT-spline requirements.
\end{example}

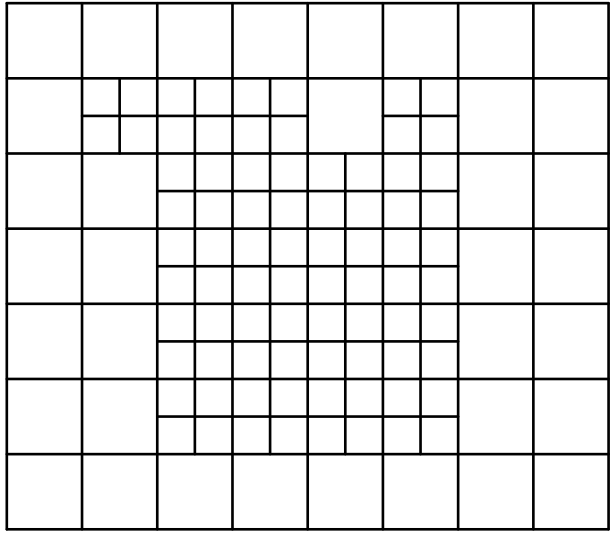
\begin{figure}
    \centering
    \begin{tikzpicture}[line cap=round,line join=round,>=triangle 45,x=1.0cm,y=1.0cm]
\draw [line width=1.pt] (0.,0.)-- (8.,0.);
\draw [line width=1.pt] (8.,0.)-- (8.,7.);
\draw [line width=1.pt] (8.,7.)-- (0.,7.);
\draw [line width=1.pt] (0.,7.)-- (0.,0.);
\draw [line width=1.pt] (0.,6.)-- (8.,6.);
\draw [line width=1.pt] (0.,5.)-- (8.,5.);
\draw [line width=1.pt] (0.,4.)-- (8.,4.);
\draw [line width=1.pt] (0.,3.)-- (8.,3.);
\draw [line width=1.pt] (0.,2.)-- (8.,2.);
\draw [line width=1.pt] (0.,1.)-- (8.,1.);
\draw [line width=1.pt] (1.,0.)-- (1.,7.);
\draw [line width=1.pt] (2.,7.)-- (2.,0.);
\draw [line width=1.pt] (3.,7.)-- (3.,0.);
\draw [line width=1.pt] (4.,0.)-- (4.,7.);
\draw [line width=1.pt] (5.,7.)-- (5.,0.);
\draw [line width=1.pt] (6.,0.)-- (6.,7.);
\draw [line width=1.pt] (7.,7.)-- (7.,0.);
\draw [line width=1.pt] (1.,5.5)-- (4.,5.5);
\draw [line width=1.pt] (5.,5.5)-- (6.,5.5);
\draw [line width=1.pt] (2.,4.5)-- (6.,4.5);
\draw [line width=1.pt] (2.,3.5)-- (6.,3.5);
\draw [line width=1.pt] (2.,2.5)-- (6.,2.5);
\draw [line width=1.pt] (2.,1.5)-- (6.,1.5);
\draw [line width=1.pt] (2.5,6.)-- (2.5,1.);
\draw [line width=1.pt] (3.5,1.)-- (3.5,6.);
\draw [line width=1.pt] (4.5,5.)-- (4.5,1.);
\draw [line width=1.pt] (5.5,1.)-- (5.5,6.);
\draw [line width=1.pt] (1.5,6.)-- (1.5,5.);
\end{tikzpicture}
    \caption{\label{fig:HB PT PHT}A hierarchical T-mesh}
\end{figure}

In fact, cross-cut and quasi-cross-cut partitions are both Basis Construction-Suitable Partitions for the construction method mentioned in~\cite{wang2013multivariate}. The cross-cut partition is the maximal Basis Construction-Suitable Partition for a given polygonal partition $\Delta$.

We now define the Eliminate Extended Edge Condition for an edge, which removes the smoothness constraint at that edge.

\begin{definition}\label{def EEE}
    Given a polygonal partition $\De$ and extended Basis Construction-Suitable Partition $\mathrm{ext}_s(\De)$, as defined in Definition~\ref{def CSP}, assume that $\{N_1, N_2, \ldots, N_n\}$ forms a basis for $S_d^{\mu}(\mathrm{ext}_s(\De))$. For an extended edge $l$ in $\mathrm{ext}_s(\De)$, let $\phi_1$ and $\phi_2$ be two adjacent cells intersecting at $l$. The condition
\begin{equation}\label{eq EEE}
    \sum_{i=1}^{n} c_i N_i \in C^d(\overline{\phi_1 \cup \phi_2})
\end{equation}
is termed the Extended Edge Elimination (EEE) condition for $l$.
\end{definition}

In $S_d^{\mu}(\mathrm{ext}_s(\De))$, the polynomial degree in each cell is $d$. Thus, the EEE condition is equivalent to the condition $\sum_{i=1}^{n} c_i N_i \in C^{\infty}(\overline{\phi_1 \cup \phi_2})$, ensures infinite differentiability at edge $l$. This property motivates the term Extended Edge Elimination (EEE) condition. We now present an equivalent algebraic formulation for the EEE condition at $l$.

% Defining the lemma for EEE condition equivalence
\begin{lemma}
Let the conditions of Definition~\ref{def EEE} hold, and let the equation of line $l$ be $L=0$, where $L \in \Pi_1$ and $\boldsymbol{n}$ is a unit normal vector of $l$. The EEE condition for $l$ is equivalent to
\begin{equation}
\sum_{i=1}^n c_i \cdot \frac{\partial^r}{\partial \boldsymbol{n}^r} \left( N_i|_{\phi_1} - N_i|_{\phi_2} \right) \Big|_l \equiv 0, \quad r = \mu+1, \ldots, d,
\end{equation}
where $\frac{\partial^r}{\partial \boldsymbol{n}^r}$ denotes the $r$-th order directional derivative along $\boldsymbol{n}$.
\end{lemma}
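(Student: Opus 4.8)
The plan is to reduce the $C^d$-smoothness condition across the single straight edge $l$ to a statement about the jump polynomial $g := \left(\sum_i c_i N_i\right)\big|_{\phi_1} - \left(\sum_i c_i N_i\right)\big|_{\phi_2}$, and then to translate that statement into pure normal derivatives. Writing $s = \sum_i c_i N_i$, each restriction $s|_{\phi_1}$ and $s|_{\phi_2}$ is a polynomial of total degree at most $d$, so $g \in \Pi_d$. First I would record the elementary fact that $s \in C^d(\overline{\phi_1 \cup \phi_2})$ holds if and only if every partial derivative of $g$ of total order at most $d$ vanishes identically on $l$.

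Next I would introduce orthonormal coordinates $(\xi,\eta)$ adapted to $l$, with $\xi$ measured along the unit normal $\boldsymbol{n}$ and $\eta$ along the tangent direction, so that after a translation $l = \{\xi = 0\}$ and $\frac{\partial}{\partial \boldsymbol{n}} = \frac{\partial}{\partial \xi}$ exactly (since $\boldsymbol{n}$ is a \emph{unit} normal and $L$ is proportional to $\xi$). The crucial reduction is that, because $l$ is an entire line segment, controlling the pure normal derivatives $\partial_\xi^a g|_{\xi=0}$ as polynomials in $\eta$ automatically controls every mixed derivative $\partial_\xi^a \partial_\eta^b g|_{\xi=0}$, the latter being obtained by tangential differentiation $\partial_\eta^b$ of the former restricted to $l$. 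Hence $s \in C^d(\overline{\phi_1\cup\phi_2})$ is equivalent to $\partial_\xi^a g|_l \equiv 0$ for $a = 0,1,\ldots,d$.

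Then I would invoke the hypothesis $s \in S_d^\mu(\mathrm{ext}_s(\Delta))$, which already guarantees $s \in C^\mu$ across every edge, in particular across $l$; equivalently, by Lemma~\ref{lem continuous condition}, $L^{\mu+1} \mid g$, so that $\partial_\xi^a g|_l \equiv 0$ holds automatically for $a = 0,1,\ldots,\mu$. Consequently the $C^d$ condition collapses to the remaining orders, namely $\partial_{\boldsymbol{n}}^r g|_l \equiv 0$ for $r = \mu+1,\ldots,d$. Substituting $g = \sum_i c_i \left(N_i|_{\phi_1} - N_i|_{\phi_2}\right)$ and using linearity of the directional derivative yields exactly the asserted identity. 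As a consistency check one also observes that, since $g \in \Pi_d$, the full $C^d$ condition forces $g \equiv 0$, matching the earlier remark that $C^d$ and $C^\infty$ coincide here; this remark is not needed for the equivalence itself.

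The main obstacle I anticipate is the reduction step in the second paragraph: one must argue carefully that matching pure normal derivatives along the whole edge is \emph{equivalent} to matching all mixed partials of the same total order, not merely implies it. The forward direction is immediate, whereas the reverse relies essentially on $l$ being a straight segment and on $g$ being a genuine bivariate polynomial, so that vanishing of a normal-derivative jump \emph{as a function of the tangential variable} is preserved under tangential differentiation. Making this rigorous, together with the identification of $\frac{\partial}{\partial \boldsymbol{n}}$ with the $\partial_\xi$ of the adapted coordinates, is where the care is needed; everything else is Taylor expansion and linearity.
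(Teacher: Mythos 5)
Your proposal is correct and follows essentially the same route as the paper's proof: both reduce the $C^d$ condition to the jump across $l$, use the built-in $C^\mu$ smoothness of the $N_i$ to dispose of orders $0,\ldots,\mu$, and use the vanishing of the jump along $l$ (hence of all tangential derivatives) to reduce mixed partials to pure normal derivatives of orders $\mu+1,\ldots,d$. Your write-up is in fact more careful than the paper's on the one delicate point --- that vanishing of the pure normal-derivative jumps \emph{as functions of the tangential variable} controls all mixed partials, stated simultaneously for all orders rather than order-by-order --- but this is a refinement of the same argument, not a different one.
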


% Providing a concise proof
\begin{proof}
Since $N_i \in S_d^\mu(\mathrm{ext}_s(\Delta))$, the EEE condition~\eqref{eq EEE} implies
\[
\sum_{i=1}^n c_i N_i \in C^d(\overline{\phi_1 \cup \phi_2}) \iff \sum_{i=1}^n c_i N_i \in C^r(\overline{\phi_1 \cup \phi_2}), \quad r = \mu+1, \ldots, d.
\]
Given $\sum_{i=1}^n c_i N_i \in C^0(\overline{\phi_1 \cup \phi_2})$, it follows that
\[
\sum_{i=1}^n c_i \left( N_i|_{\phi_1} - N_i|_{\phi_2} \right) \Big|_l \equiv 0.
\]
Thus, all directional derivatives of $\sum_{i=1}^n c_i (N_i|_{\phi_1} - N_i|_{\phi_2})$ along $l$ vanish, yielding
\[
\sum_{i=1}^n c_i N_i \in C^r(\overline{\phi_1 \cup \phi_2}) \iff \sum_{i=1}^n c_i \cdot \frac{\partial^r}{\partial \boldsymbol{n}^r} \left( N_i|_{\phi_1} - N_i|_{\phi_2} \right) \Big|_l \equiv 0, \quad r = \mu+1, \ldots, d.
\] 

$\square$

\end{proof}

The EEE condition for $l$ forms a linear system $M_l \boldsymbol{c} = 0$, where $\boldsymbol{c} = (c_1, c_2, \ldots, c_n)^\top$. The global EEE conditions for all extended edges are defined as follows.

% Defining global EEE conditions
\begin{definition}
Given a polygonal partition $\Delta$ and extended Basis Construction-Suitable partition $\mathrm{ext}_s(\Delta)$, let $\{N_1, N_2, \ldots, N_n\}$ be a basis for $S_d^\mu(\mathrm{ext}_s(\Delta))$. For the set of extended edges $\{l_1, l_2, \ldots, l_s\}$, the EEE conditions for each $l_i$, $i = 1, \ldots, s$, form the linear system
\begin{equation}\label{eq global EEE}
M \boldsymbol{c} = 0,
\end{equation}
where $M = \begin{pmatrix} M_{l_1} & M_{l_2} & \cdots & M_{l_s} \end{pmatrix}^\top$ and $\boldsymbol{c} = (c_1, c_2, \ldots, c_n)^\top$. This system is termed the \textbf{Global Extended Edge Elimination Conditions} (EEE conditions).
\end{definition}

% Stating the main theorem
\begin{theorem}\label{thm basis construction}
Given a polygonal partition $\Delta$ and its extended Basis Construction-Suitable partition $\mathrm{ext}_s(\Delta)$, let $\{N_1, N_2, \ldots, N_n\}$ be a basis for $S_d^\mu(\mathrm{ext}_s(\Delta))$. If the homogeneous linear system $M \boldsymbol{c} = 0$ from~\eqref{eq global EEE} has a basis of solutions $\{\tilde{c}_j\}_{j=1}^h$, where $\tilde{c}_j = (\tilde{c}_{1,j}, \tilde{c}_{2,j}, \ldots, \tilde{c}_{n,j})$, then the functions
\[
\left\{ \sum_{i=1}^n \tilde{c}_{i,j} N_i \right\}_{j=1}^h
\]
form a basis for $S_d^\mu(\Delta)$, and consequently, $h = \dim S_d^\mu(\Delta)$.
\end{theorem}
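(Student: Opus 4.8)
The plan is to reduce the statement to a piece of linear algebra on the coefficient space $\{\boldsymbol{c}\in\RR^n : M\boldsymbol{c}=0\}$. By Lemma~\ref{lem ascending chain} we have the subspace inclusion $S_d^\mu(\Delta)\subseteq S_d^\mu(\mathrm{ext}_s(\Delta))$, so every element of $S_d^\mu(\Delta)$ is represented uniquely as $\sum_{i=1}^n c_i N_i$ relative to the basis $\{N_i\}$. The entire theorem then rests on the single characterization
\[
\sum_{i=1}^n c_i N_i \in S_d^\mu(\Delta)\iff M\boldsymbol{c}=0 .
\]
Once this equivalence is in hand, I would finish by observing that the linear map $\boldsymbol{c}\mapsto\sum_i c_i N_i$ is injective (the $N_i$ are linearly independent, being a basis of $S_d^\mu(\mathrm{ext}_s(\Delta))$) and, by the equivalence, carries the solution space of $M\boldsymbol{c}=0$ bijectively onto $S_d^\mu(\Delta)$. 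An isomorphism sends a basis to a basis, so $\{\sum_i \tilde{c}_{i,j}N_i\}_{j=1}^h$ is a basis of $S_d^\mu(\Delta)$ and consequently $h=\dim S_d^\mu(\Delta)$.

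To prove the characterization I would argue both directions through the geometry of the extension. First I would record the key geometric fact: each extended edge $l_k$ lies in the \emph{interior} of a single cell of $\Delta$, since the edges of $\mathrm{ext}_s(\Delta)$ not inherited from $\Delta$ are precisely the segments added to subdivide the $\Delta$-cells. For the forward direction, if $s=\sum c_i N_i\in S_d^\mu(\Delta)$ then $s$ restricts to one polynomial on each $\Delta$-cell, hence is $C^\infty$ across every $l_k$; in particular it satisfies the $C^d$ requirement of the EEE condition at each $l_k$, which is exactly $M\boldsymbol{c}=0$. For the reverse direction I would use that $S_d^\mu(\mathrm{ext}_s(\Delta))\subseteq C^\mu(\Omega)$ by definition, so $s=\sum c_i N_i$ is already globally $C^\mu$ and piecewise polynomial of degree $d$; the only thing separating it from $S_d^\mu(\Delta)$ is that its restriction to a $\Delta$-cell $\phi$ might fail to be a single polynomial.

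Here the main obstacle is to promote the per-edge $C^d$ conditions to a single-polynomial conclusion on each cell. The mechanism is the elementary polynomial fact already noted after Definition~\ref{def EEE}: if two polynomials of degree $\le d$ have matching derivatives up to order $d$ across the line $L=0$, then their difference is divisible by $L^{d+1}$ and therefore vanishes, so the two pieces are \emph{equal} as polynomials. Thus $M\boldsymbol{c}=0$ forces the two polynomial pieces meeting along each $l_k$ to coincide. I would then run a connectivity argument inside each $\Delta$-cell $\phi$: the sub-cells of $\mathrm{ext}_s(\Delta)$ lying in $\phi$ form a connected adjacency graph whose interior interfaces are extended edges, and equality across every such interface propagates to show $s|_\phi$ is one polynomial of degree $\le d$. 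Combined with the automatic global $C^\mu$ smoothness, this places $s$ in $S_d^\mu(\Delta)$ and completes the equivalence. The delicate points to watch are that the edges of $\mathrm{ext}_s(\Delta)$ inherited from $\Delta$ (possibly subdivided by new vertices) contribute only the $C^\mu$ constraints already guaranteed, and that the connectivity argument must handle cells subdivided by several extended edges simultaneously rather than one at a time.
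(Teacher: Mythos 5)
Your proposal is correct and takes essentially the same route as the paper: both reduce the theorem, via the inclusion $S_d^\mu(\Delta)\subseteq S_d^\mu(\mathrm{ext}_s(\Delta))$ from Lemma~\ref{lem ascending chain}, to the characterization $\sum_{i=1}^n c_i N_i\in S_d^\mu(\Delta)\iff M\boldsymbol{c}=0$, and then conclude by linear algebra, your basis-to-basis isomorphism being a repackaging of the paper's two explicit steps (spanning and linear independence of the combinations $\sum_i \tilde{c}_{i,j}N_i$). If anything, your converse direction---promoting the per-edge $C^d$ conditions to equality of the adjacent polynomial pieces via divisibility by $L^{d+1}$ and a connectivity argument inside each $\Delta$-cell---is spelled out in more detail than in the paper, which asserts that step without proof.
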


\begin{proof}
      Let \(\{l_1, l_2, \ldots, l_s\}\) denote the set of all extended edges. The proof proceeds in two steps:

\begin{enumerate}
    \item For any \(s \in S_{d}^{\mu}(\Delta)\), Lemma~\ref{lem ascending chain} implies \(s \in S_{d}^{\mu}(\mathrm{ext}_s(\Delta))\), allowing the representation
      \[
      s = \sum_{i=1}^n s_i N_i.
      \]
      Since \(s \in S_{d}^{\mu}(\Delta)\), for each virtual edge \(l_i\), \(i = 1, 2, \ldots, s\), it follows $\sum_{i=1}^{n} s_i N_i \in C^{\infty}\left(\overline{\phi_1^{{(i)}} \cup \phi_2^{{(i)}}}\right)$
      where $\phi_1^{{(i)}}, \phi_2^{{(i)}}$ are two adjacent cells of $l_i$. Consequently, in \(\Delta\), the following conditions hold:
           $$\sum_{i=1}^n s_i \cdot \frac{\partial^r}{\partial \boldsymbol{n}^r} \left( N_i|_{\phi_1^{{(i)}}} - N_i|_{\phi_2^{(i)}} \right) \Big|_{l_i} \equiv 0, \quad r = \mu+1, \ldots, d,$$
          where $\phi_1^{{(i)}}, \phi_2^{{(i)}}$ are two adjacent cells of $l_i$.

      Thus, \((s_1, s_2, \ldots, s_n)\) satisfies the EEE linear system \(M\boldsymbol{c} = 0\), yielding
      \[
      (s_1, s_2, \ldots, s_n) = \sum_{j=1}^h k_j \tilde{c}_j,
      \]
      where \(\tilde{c}_j = (\tilde{c}_{1,j}, \tilde{c}_{2,j}, \ldots, \tilde{c}_{n,j})\) and \(k_j\), \(j = 1, 2, \ldots, h\), are constants. Consequently,
      \[
      s = \sum_{j=1}^h k_j \left( \sum_{i=1}^n \tilde{c}_{i,j} N_i \right),
      \]
      implying \(s \in \mathrm{span} \left\{ \sum_{i=1}^n \tilde{c}_{i,j} N_i \right\}_{j=1}^h\). Hence, \(S_{d}^{\mu}(\Delta) \subseteq \mathrm{span} \left\{ \sum_{i=1}^n \tilde{c}_{i,j} N_i \right\}_{j=1}^h\).

      Conversely, under the EEE conditions, \(\sum_{i=1}^n \tilde{c}_{i,j} N_i \in S_{d}^{\mu}(\Delta)\) for \(j = 1, 2, \ldots, h\), so 
      \[\mathrm{span} \left\{ \sum_{i=1}^n \tilde{c}_{i,j}N_i \right\}_{j=1}^h \subseteq S_{d}^{\mu}(\Delta)\]. 
      
      Therefore,
      \[
      S_{d}^{\mu}(\Delta) = \mathrm{span} \left\{ \sum_{i=1}^n \tilde{c}_{i,j}N_i \right\}_{j=1}^h.
      \]

    \item We establish the linear independence of the functions in \(\left\{ \sum_{i=1}^n \tilde{c}_{i,j}N_i \right\}_{j=1}^h\).

      Assume \(\sum_{j=1}^h d_j \left( \sum_{i=1}^n \tilde{c}_{i,j} N_i \right) \equiv 0\). We prove \(d_j = 0\) for \(j = 1, 2, \ldots, h\).

      This equation simplifies to
      \[
      \sum_{i=1}^n \left( \sum_{j=1}^h d_j \tilde{c}_{i,j} \right) N_i = 0,
      \]
      and since \(\{N_1, N_2, \ldots, N_n\}\) is a basis for \(S_{d}^{\mu}(\mathrm{ext}_s(\Delta))\), it follows that
      \[
      \sum_{j=1}^h d_j \tilde{c}_{i,j} = 0, \quad i = 1, 2, \ldots, n.
      \]
      This can be expressed as
      \[
      (d_1, d_2, \ldots, d_h) \begin{pmatrix} \tilde{c}_1 \\ \tilde{c}_2 \\ \vdots \\ \tilde{c}_h \end{pmatrix} = 0.
      \]
      Given that \(\{\tilde{c}_j\}_{j=1}^h\) forms a linearly independent basis of solutions, \(d_j = 0\) for \(j = 1, 2, \ldots, h\).
\end{enumerate}

In conclusion, \(S_{d}^{\mu}(\Delta) = \mathrm{span} \left\{ \sum_{i=1}^n \tilde{c}_{i,j} N_i \right\}_{j=1}^h\), and the functions in \(\left\{ \sum_{i=1}^n \tilde{c}_{i,j} N_i \right\}_{j=1}^h\) are linearly independent. Thus, these functions form a basis for \(S_{d}^{\mu}(\Delta)\), and consequently, \(h = \dim S_{d}^{\mu}(\Delta)\).

$\square$

\end{proof}

% Noting further discussion
The construction above, termed semi-implicit, combines basis functions over $\mathrm{ext}_s(\Delta)$ with the EEE conditions and is detailed in Section 4.

In the following, based on the concept of Basis Construction-Suitable Partitions and EEE conditions, we provide the framework for the construction of basis functions for the spline space over arbitrary polygonal partitions. Before presenting the framework, we establish a lemma that ensures the existence of a compatible pair $(\mathrm{ext}_s(\Delta), \mathcal{M})$.

\begin{lemma}\label{lem compatible}
For any polygonal partition $\Delta$, there exist an extended partition $\mathrm{ext}_s(\Delta)$ and a basis construction method $\mathcal{M}$ such that the pair $(\mathrm{ext}_s(\Delta), \mathcal{M})$ is compatible.
\end{lemma}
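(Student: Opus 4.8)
The plan is to prove the lemma constructively by exhibiting one explicit compatible pair: the maximal extension of $\Delta$ together with the cross-cut basis construction of \cite{wang2013multivariate}. For an arbitrary polygonal partition $\Delta$, Lemma~\ref{lem ascending chain} provides the ascending chain terminating in the extended cross-cut partition $\Delta_c = \mathrm{ext}_s(\Delta)$, obtained by extending edges until every grid line is a cross-cut. I would take $\mathrm{ext}_s(\Delta) := \Delta_c$ as the candidate extended partition and let $\mathcal{M}$ denote the flow-direction / source-cell truncation method built on the edge-cofactor characterization of Lemma~\ref{lem continuous condition}, as described in \cite{wang2013multivariate}.

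It then suffices to verify the two conditions of Definition~\ref{def CSP} for $\Delta_c$ under $\mathcal{M}$. For dimensional stability, I would invoke Lemma~\ref{lem dim cross-cut}: because $\Delta_c$ is a cross-cut partition, $\dim S_d^{\mu}(\Delta_c)$ is given by a closed formula in the number of cross-cuts $L$ and the intersection multiplicities $N_i$, quantities that depend only on the combinatorial structure and not on the geometric placement of vertices; hence the dimension is stable. For direct basis construction, I would appeal to the construction of \cite{wang2013multivariate}, which uses Lemma~\ref{lem continuous condition} to produce a complete basis of truncated powers directly on $\Delta_c$. The essential point is that $\Delta_c$ is a fixed point of the extension operation: every grid line of a cross-cut partition already has both endpoints on $\partial\Omega$, so there is nothing left to extend, and the construction therefore operates directly on $\Delta_c$ in the precise sense of the second bullet of Definition~\ref{def CSP}. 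Together these show $\Delta_c$ is a Basis Construction-Suitable Partition under $\mathcal{M}$, so $(\mathrm{ext}_s(\Delta), \mathcal{M}) = (\Delta_c, \mathcal{M})$ is compatible.

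I expect the only real obstacle to be bookkeeping rather than substance: confirming that the maximal extension terminates in a well-defined cross-cut partition for an arbitrary, possibly irregular $\Delta$, and that the construction of \cite{wang2013multivariate} applies verbatim to the resulting $\Delta_c$ independently of the original $\Delta$. Both facts are already guaranteed by the preliminaries (Lemmas~\ref{lem ascending chain}, \ref{lem dim cross-cut}, and~\ref{lem continuous condition}), so I anticipate the final argument will be brief.
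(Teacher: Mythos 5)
Your proposal is correct and follows essentially the same route as the paper: both select the maximal (cross-cut) extension $\Delta_c$ together with the construction method of \cite{wang2013multivariate}, relying on the stable dimension formula of Lemma~\ref{lem dim cross-cut} and the directness of that construction. Your write-up is simply a more detailed version, explicitly checking the two conditions of Definition~\ref{def CSP} that the paper's one-line proof leaves implicit.
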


\begin{proof}
Select the cross-cut partition $\mathrm{ext}_s(\Delta)$ extended from $\Delta$ and adopt the basis construction method $\mathcal{M}$ for the spline space over cross-cut partitions, as described in~\cite{wang2013multivariate}. This ensures that, for any polygonal partition $\Delta$, a basis for $S_d^\mu(\mathrm{ext}_s(\Delta))$ can always be constructed.

$\square$

\end{proof}

% Outlining the framework for basis construction
The framework for constructing a basis for the polynomial spline space $S_d^\mu(\Delta)$ is as follows:
\begin{itemize}
    \item Given a polygonal partition $\Delta$, select a basis construction method $\mathcal{M}$.
    \item If $(\Delta, \mathcal{M})$ is compatible, proceed directly. Otherwise, extend $\Delta$ to $\mathrm{ext}_s(\Delta)$ such that $(\mathrm{ext}_s(\Delta), \mathcal{M})$ is compatible. If no such $\mathrm{ext}_s(\Delta)$ exists for $\mathcal{M}$, select a new method $\mathcal{M}'$ and an extended partition $\mathrm{ext}_{s'}(\Delta)$ such that $(\mathrm{ext}_{s'}(\Delta), \mathcal{M}')$ is compatible, as guaranteed by Lemma~\ref{lem compatible}. Thus, a compatible pair $(\mathrm{ext}_s(\Delta), \mathcal{M})$ is obtained.
    \item Construct a basis $\{N_1, N_2, \ldots, N_n\}$ for $S_d^\mu(\mathrm{ext}_s(\Delta))$ using $\mathcal{M}$.
    \item By Theorem~\ref{thm basis construction}, combine $\{N_1, N_2, \ldots, N_n\}$ with the EEE conditions to form a basis for $S_d^\mu(\Delta)$.
\end{itemize}

\begin{figure}
    \centering
\begin{tikzpicture}[
  % Node style: basic rectangle, centered text
  box/.style={
    rectangle,
    draw,
    minimum height=2em,
    minimum width=4em, % Slightly wider for longer label
    text centered,
    font=\sffamily
  },
  % Arrow style: basic arrow, thick line
  arrow/.style={
    ->,
    thick
  }
]

% Nodes with absolute coordinates
\node[box] (A) at (0,0) {$\mathrm{Initial \ Partition:}\ \Delta$};
\node[box] (B) at (0,-3) {\text{CSP}:$\mathrm{ext}_0(\Delta)$};
\node[box] (C) at (6,-3) {\text{ CSP:}$\mathrm{ext}_s(\Delta)$};
\node[box] (D) at (6,-6) {$S_d^{\mu}(\mathrm{ext}_s(\Delta))$};
\node[box] (E) at (0,-6) {$S_d^{\mu}(\De)$};

% Arrows to represent the flow
\draw[arrow] (A) -- (B) node[midway, left] {No Extended Edges};
\draw[arrow] (A) -- (C) node[midway, right] {Extended Edges};
\draw[arrow] (B) -- (E) node[midway, left] {Directly Construction};
\draw[arrow] (C) -- (D) node[midway, right] {Directly Construction};
\draw[arrow] (D) -- (E) node[midway, below] {EEE Conditions};

\end{tikzpicture}
    \caption{\label{fig:framework}Framework for Constructing a Basis for the Polynomial Spline Space over Arbitrary Polygonal Partition}
\end{figure}
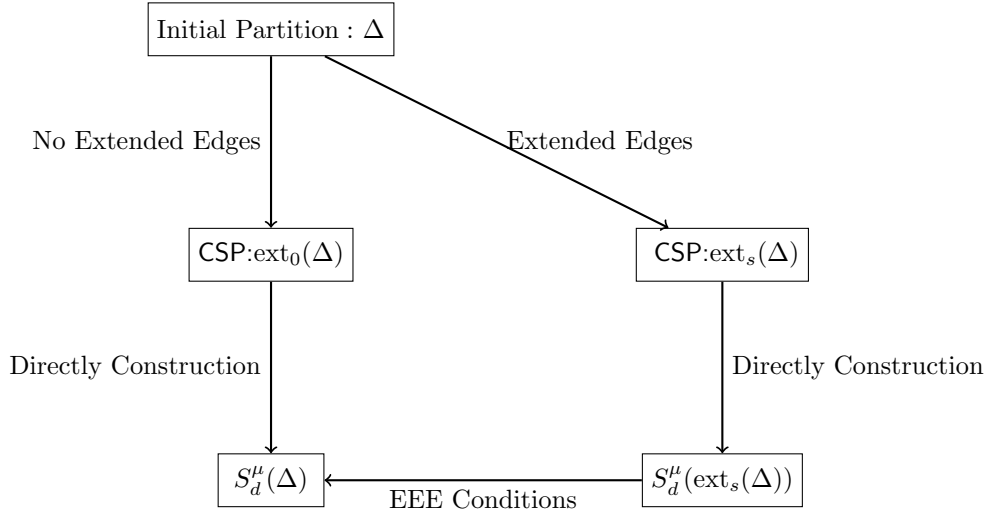

Figure~\ref{fig:framework} illustrates the basis construction process, where CSP denotes Basis Construction-Suitable Partition. The following Algorithm~\ref{alg:basis_construction} outlines the framework for constructing a basis for the polynomial spline space $S_d^\mu(\Delta)$.

\begin{algorithm}[htb]
\caption{Basis Construction for Polynomial Spline Space}
\label{alg:basis_construction}
\begin{algorithmic}[1]
\REQUIRE ~~\\
A polygonal partition $\Delta$, degree $d$, continuity $\mu$;
\ENSURE ~~\\
A basis for $S_d^\mu(\Delta)$;
\STATE Select a basis construction method $\mathcal{M}$;
\IF {$(\Delta, \mathcal{M})$ is compatible}
    \STATE Proceed with $\Delta$ and $\mathcal{M}$;
\ELSE
    \STATE Find an extended partition $\mathrm{ext}_s(\Delta)$ such that $(\mathrm{ext}_s(\Delta), \mathcal{M})$ is compatible;
    \IF {no such $\mathrm{ext}_s(\Delta)$ exists for $\mathcal{M}$}
        \STATE Select a new method $\mathcal{M}'$ and extended partition $\mathrm{ext}_{s'}(\Delta)$ such that $(\mathrm{ext}_{s'}(\Delta), \mathcal{M}')$ is compatible;
        \STATE Set $\mathcal{M} \gets \mathcal{M}'$, $\mathrm{ext}_s(\Delta) \gets \mathrm{ext}_{s'}(\Delta)$;
    \ENDIF
\ENDIF
\STATE Construct basis $\{N_1, N_2, \ldots, N_n\}$ for $S_d^\mu(\mathrm{ext}_s(\Delta))$ using $\mathcal{M}$;
\STATE Solve the homogeneous linear system $M \boldsymbol{c} = 0$ from the EEE conditions to obtain solution basis $\{\tilde{c}_j\}_{j=1}^h$;
\STATE Compute basis for $S_d^\mu(\Delta)$ as $\left\{ \sum_{i=1}^n \tilde{c}_{i,j} N_i \right\}_{j=1}^h$;
\STATE Output $\left\{ \sum_{i=1}^n \tilde{c}_{i,j} N_i \right\}_{j=1}^h$.
\end{algorithmic}
\end{algorithm}

Although Definition~\ref{def EEE} of the EEE condition requires that all points on an edge \( l \), including its endpoints, satisfy the condition, Algorithm~\ref{alg:basis_construction} constructs basis functions for the spline space \( S_d^{\mu}(\Delta) \), where the \( C^k \) smoothness condition is inherently satisfied at all interior vertices of \( \Delta \). Consequently, the EEE condition can be relaxed to apply only to the edges, excluding their interior vertices.

Finally, we highlight the role of the proposed framework in applications such as geometric modeling and analysis requiring adaptive refinement. The adaptive refinement process involves:
\begin{itemize}
    \item \textbf{Initial Setup}: Given a partition $\Delta$, an error threshold $\epsilon$, and an objective function $f$, identify cells in $\Delta$ where the error of $f$ exceeds $\epsilon$ for refinement.
    \item \textbf{Refinement}: Refine these cells to create a new partition $\Delta'$, and construct basis functions on $\Delta'$ for subsequent error assessment.
\end{itemize}

Constructing basis functions on $\Delta'$ often requires extending edges to form $\mathrm{ext}_s(\Delta')$, causing refinement of cells that already satisfy $\epsilon$. This increases computational cost and reduces efficiency. Our framework employs the EEE condition \cite{zhong2025basisconstructionpolynomialspline} to target only cells with errors above $\epsilon$, minimizing unnecessary refinements. This precision enhances efficiency in adaptive refinement, as refining cells solely for basis construction can amplify errors in later iterations, requiring further refinements.

For instance, HB-spline construction \cite{HB1} requires a hierarchical mesh for local tensor-product B-splines, leading to additional refinement of cells meeting threshold $\epsilon$. In contrast, PHT-splines \cite{PHT} avoid this by using cross-division refinement for cells not meeting error threshold $\epsilon$, eliminating the need for edge extensions.
%%%%%%%%%%%%%%%%%%%%%%%%%%%%%%%%%%%%%%%%%%%%%%%%%%%%%%%%%%%%%%%%%%%%%%%%%%%

\section{Comparisons and Examples}
In this section, we address two objectives: first, we compare the proposed basis construction framework with existing methods; second, we analyze dimensional instability in the Morgan-Scott partition by constructing a basis and examining degeneration through EEE conditions.

\subsection{Comparisons}
This subsection compares the proposed framework with other basis construction methods. First, we compare our method with the knot insertion and the Oslo algorithm to highlight its novelty. Second, using spline space over T-mesh as an example, we conduct a comprehensive comparison of basis construction methods, classifying them into three categories: explicit, implicit, and semi-implicit constructions. The proposed framework is semi-implicit. Additionally, we briefly compare spline basis construction with finite element basis construction to clarify their distinctions.

\subsubsection{A Comparison of One-Dimensional Case}
The knot insertion algorithm \cite{boehm1980inserting} and the Oslo algorithm \cite{cohen1980discrete} are established methods for B-spline basis construction, transforming bases from a refined spline space $S_d^\mu(\Delta_2)$ to a coarser spline space $S_d^\mu(\Delta_1)$. This subsection compares these algorithms with our proposed method.

By Lemma~\ref{lem one-dimension}, all B-splines in $S_d^\mu(\Delta_2)$ belong to $S_d^\mu(\Delta_1)$, expressed as:
\begin{equation}
B_{\Delta_2} = B_{\Delta_1} A,
\end{equation}
where $B_{\Delta_i}$ denotes the matrix of B-spline basis functions defined by knot vector $\Delta_i$ for $i=1,2$, and $A$ is the transition matrix. The Oslo and knot insertion algorithms compute $A$ iteratively to represent $S_d^\mu(\Delta_2)$ basis functions using $S_d^\mu(\Delta_1)$ B-splines. In contrast, our method constructs $S_d^\mu(\Delta_2)$ basis functions by linearly combining $S_d^\mu(\Delta_1)$ B-splines, with coefficients satisfying the EEE condition (Equation~\eqref{eq one-dimension EEE}). Unlike these algorithms, which directly select B-splines as basis functions, our approach focuses on coefficient computation to enforce smoothness constraints.

The novelty of our method lies in addressing the smoothness transition at knots in $\Delta_1 \setminus \Delta_2$. Prior methods, including the Oslo and knot insertion algorithms, emphasize basis representation without explicitly handling this smoothness change. Our EEE conditions capture this transition, enabling extension to two-dimensional cases.

Table~\ref{tab:oslo_vs_new} presents the differences between the Oslo algorithm and our method in the one-dimensional case. In fact, by selecting the columns of the transition matrix \( A \) as a set of particular solutions to the Extended Edge Elimination (EEE) condition, which inherently satisfy the condition, our method constructs a set of normalized B-splines in \( S_d^{\mu}(\Delta_2) \). Consequently, our algorithm can be regarded as a generalization of the Oslo algorithm, representing a specific set of solutions to the EEE condition.

\begin{table}[ht]
\centering
\resizebox{0.9\textwidth}{!}{
\begin{tabular}{lcc}
\toprule
Aspect & Oslo Algorithm & Proposed Method \\
\midrule
Basis Representation & B-splines of $S_d^\mu(\Delta_2)$ & Linear combination of $S_d^\mu(\Delta_1)$ B-splines \\
Smoothness Handling & Adjusts knot multiplicity & Enforces $C^d$ via EEE conditions at $\Delta_1 \setminus \Delta_2$ \\
Transition Mechanism & Iteratively computes matrix $A$ & Solves linear system $M \boldsymbol{c} = 0$ \\
Novelty & Standard knot insertion & Addresses smoothness at $\Delta_1 \setminus \Delta_2$ \\
\bottomrule
\end{tabular}
}
\caption{Comparison of Oslo Algorithm and Proposed Method for $S_d^\mu(\Delta_2)$}
\label{tab:oslo_vs_new}
\end{table}

\subsubsection{A Comparative Analysis of Basis Construction}
We classify spline space basis function constructions into three categories: explicit, semi-implicit, and implicit. Using the spline space over a T-mesh as an example, we compare their advantages and disadvantages. Additionally, we briefly contrast spline basis construction with finite element basis construction to highlight the lower degrees of freedom in spline methods.

\begin{definition}\label{def E,SE,I}
The construction method $\mathcal{M}$ for spline space $S_d^\mu(\Delta)$ is categorized into three types:
\begin{itemize}
    \item \textbf{Explicit Construction}: Basis functions of $S_d^\mu(\Delta)$ are defined explicitly without solving a linear system for parameter selection.
    \item \textbf{Semi-Implicit Construction}: Basis functions of $S_d^\mu(\Delta)$ are defined explicitly but require solving a linear system for parameter selection.
    \item \textbf{Implicit Construction}: Basis functions of $S_d^\mu(\Delta)$ are determined solely by solving a linear system, without an explicit representation.
\end{itemize}
\end{definition}

Based on Definition~\ref{def E,SE,I}, the method proposed in this paper is classified as a semi-implicit construction. It derives basis functions for the spline space by solving a linear system defined by the EEE condition, using combination coefficients from a set of basis functions in the solution space. This subsection examines the advantages and limitations of explicit, semi-implicit, and implicit construction methods through examples.

\begin{example}
Consider the spline space $S_{d,d}^{\mu,\mu}(\Delta)$ defined over a T-mesh $\Delta$. We illustrate the three construction types below.

\begin{itemize}
    \item \textbf{Explicit Construction}: This approach constructs spline spaces over T-meshes using local tensor-product B-splines, tailored to specific T-mesh configurations. It generates a set of linearly independent functions, but these may not form a basis for the entire spline space. The focus is on creating a spanning set for the linear space.

    Examples include HB-splines over hierarchical meshes \cite{HB1,HB2,HB3,HB4}, THB-splines over hierarchical meshes \cite{thb}, LR-splines over specific LR-meshes \cite{patrizi2020adaptive}, and PHT-splines over hierarchical T-meshes \cite{PHT}.

    \item \textbf{semi-implicit Construction}: The proposed method introduces the first semi-implicit construction, deriving basis functions for spline spaces on arbitrary polygonal partitions. It combines an explicit construction with EEE conditions to ensure a complete basis. The explicit component must provide a basis, not merely linearly independent generators.

    Example: PT-spline over arbitrary T-meshes.

    \item \textbf{Implicit Construction}: This method defines basis functions via the null space of a linear system, enforcing smoothness constraints across cells without explicit construction. It is critical in applications like isogeometric analysis (IGA) \cite{thomas2022u}.

    Example: U-spline over arbitrary T-meshes \cite{thomas2022u}.
\end{itemize}
\end{example}

Table~\ref{tab:construction_comparison} compares explicit, semi-implicit, and implicit construction methods for spline spaces, focusing on construction approach, theoretical robustness, mesh refinement flexibility, computational complexity, and applicability to theoretical and practical contexts.

\begin{table}[ht]
\centering
\resizebox{\textwidth}{!}{
\begin{tabular}{lccc}
\toprule
Aspect & Explicit Construction & semi-implicit Construction & Implicit Construction \\
\midrule
Construction Approach & Local tensor-product B-splines & Explicit construction with EEE conditions & Null space of linear system for smoothness \\
Theoretical Robustness & Susceptible to linear dependence; incomplete basis & Resolves all theoretical issues & Robust but limited theoretical insight \\
Refinement Flexibility & Limited by mesh constraints & Enables arbitrary mesh refinement & Supports arbitrary meshes \\
Computational Complexity & Low, direct computation & High, due to EEE condition computation & Moderate, linear system solution \\
Theoretical Contribution & Hindered by basis limitations & Comprehensive, addresses all concerns & Limited, application-focused \\
Practical Applicability & Constrained by mesh restrictions & Broad, computationally demanding & Effective for engineering (e.g., IGA) \\
Future Research Focus & Mitigating linear dependence & Optimizing EEE computation & Deepening theoretical insights \\
\bottomrule
\end{tabular}
}
\caption{Comparison of Construction Methods for Spline Spaces}
\label{tab:construction_comparison}
\end{table}

Explicit construction is simple but often produces linearly dependent functions, failing to form a complete basis and requiring restrictive mesh conditions for refinement. semi-implicit construction addresses these issues through EEE conditions, supporting flexible refinement on arbitrary meshes, though computing these conditions is computationally intensive, highlighting a key area for future optimization. Implicit construction is highly effective for practical applications, such as isogeometric analysis, due to its straightforward handling of smoothness constraints, but it offers minimal theoretical advancements.

Finally, we briefly compare the differences between the construction of finite element basis functions and the construction of spline basis functions from the perspective of degrees of freedom.

Finite element construction emphasizes defining a complete degree-$d$ polynomial within each cell, where the degrees of freedom fully determine the polynomial in that cell. In contrast, spline basis construction operates globally, using degrees of freedom across multiple adjacent cells to define a piecewise polynomial, without requiring a complete polynomial per cell. Both methods produce $\mu$-order smooth piecewise polynomial basis functions on the mesh, but spline construction requires fewer degrees of freedom. Consequently, spline construction imposes less restrictive conditions on polynomial degree $d$ and smoothness $\mu$, requiring only $\mu \leq d$, unlike the stricter requirements in finite element construction.

Table~\ref{tab:fe_spline_comparison} summarizes the key differences between these construction methods.

\begin{table}[ht]
\centering
\resizebox{\textwidth}{!}{
\begin{tabular}{lcc}
\toprule
Aspect & Finite Element Construction & Spline Construction \\
\midrule
Construction Approach & Local, per-cell polynomial definition & Global, across multiple cells \\
Polynomial Requirement & Complete degree-$d$ polynomial per cell & Piecewise polynomial, no per-cell completion \\
Degrees of Freedom & High, fully determines cell polynomial & Lower, shared across adjacent cells \\
Smoothness Constraint & Strict, high degree and smoothness & Flexible, only requires $\mu \leq d$ \\
Basis Function Type & $\mu$-order smooth piecewise polynomials & $\mu$-order smooth piecewise polynomials \\
\bottomrule
\end{tabular}
}
\caption{\label{tab:fe_spline_comparison}Comparison of Finite Element and Spline Basis Construction}
\end{table}

\subsection{Examples}
In this subsection, we take the construction of spline space basis functions over Morgan-Scott partition~\cite{morgan1977dimension} as an example. We first construct a set of basis functions over Morgan-Scott partition~\cite{morgan1977dimension}, and then use this set of basis functions to explain dimensional instability as the basis function degradation caused by the EEE condition.

A Morgan-Scott partition is shown in Figure~\ref{fig:MS} denoted as $\De_{\mathrm{MS}}$, many papers have studied the dimension of $S_2^1(\De_{\mathrm{MS}})$, and the following lemma is given:

\begin{figure}
    \centering
    \begin{tikzpicture}[line cap=round,line join=round,>=triangle 45,x=1.0cm,y=1.0cm,scale=0.8]
\draw [line width=1.pt] (7.16,7.08)-- (2.94,1.3);
\draw [line width=1.pt] (2.94,1.3)-- (11.1,1.66);
\draw [line width=1.pt] (11.1,1.66)-- (7.16,7.08);
\draw [line width=1.pt] (7.16,7.08)-- (6.26,4.2);
\draw [line width=1.pt] (6.26,4.2)-- (7.86,3.82);
\draw [line width=1.pt] (7.86,3.82)-- (7.16,7.08);
\draw [line width=1.pt] (6.26,4.2)-- (2.94,1.3);
\draw [line width=1.pt] (2.94,1.3)-- (6.86,2.56);
\draw [line width=1.pt] (6.86,2.56)-- (11.1,1.66);
\draw [line width=1.pt] (11.1,1.66)-- (7.86,3.82);
\draw [line width=1.pt] (7.86,3.82)-- (6.86,2.56);
\draw [line width=1.pt] (6.86,2.56)-- (6.26,4.2);
\begin{scriptsize}
\draw [fill=black] (7.16,7.08) circle (2.5pt);
\draw[color=black] (7.2,7.45) node {$A$};
\draw [fill=black] (2.94,1.3) circle (2.5pt);
\draw[color=black] (2.8,1.67) node {$B$};
\draw [fill=black] (11.1,1.66) circle (2.5pt);
\draw[color=black] (11.24,2.03) node {$C$};
\draw [fill=black] (6.26,4.2) circle (2.5pt);
\draw[color=black] (6.2,4.57) node {$D$};
\draw [fill=black] (7.86,3.82) circle (2.5pt);
\draw[color=black] (8.,4.19) node {$E$};
\draw [fill=black] (6.86,2.56) circle (2.5pt);
\draw[color=black] (6.9,2.93) node {$F$};
\end{scriptsize}
\end{tikzpicture}
    \caption{\label{fig:MS}Morgan-Scott Partition}
\end{figure}
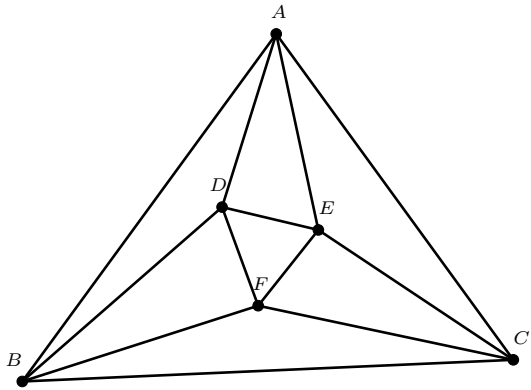

\begin{lemma}\label{lem dimension MS}~\cite{morgan1977dimension,alfeld1986dimension,gmelig1985dimension,wang2013multivariate}
\begin{equation*}
\dim S_2^1(\De_{\mathrm{MS}}) =
\begin{cases}
7, & \text{when $AF,BE,CD$ intersect at one point}, \\
6, & \text{otherwise}.
\end{cases}
\end{equation*}
\end{lemma}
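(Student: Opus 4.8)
The plan is to compute $\dim S_2^1(\Delta_{\mathrm{MS}})$ by the standard cofactor/smoothness-counting technique for bivariate spline spaces, treating the interior vertices separately and tracking how the geometry enters at the single special vertex $F$. First I would fix the Bernstein--Bézier (or cofactor) representation: a function $s \in S_2^1(\Delta_{\mathrm{MS}})$ is a quadratic on each of the triangular cells, and by Lemma~\ref{lem continuous condition} the $C^1$ gluing across each interior edge $L_{ij}=0$ is encoded by an edge cofactor $q_{ij} \in \Pi_{2-1-1}=\Pi_0$, i.e.\ a \emph{constant} multiplying $L_{ij}^2$. Thus the difference of the two quadratic pieces across each interior edge is $c_{ij}L_{ij}^2$ for a scalar $c_{ij}$. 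The global smoothness is equivalent to these edge cofactors being consistent around every interior vertex, so the dimension count reduces to counting free parameters subject to the vertex-consistency relations.

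Next I would set up the vertex relations. The interior vertices are $D,E,F$ (each where three edges meet, since $AD$–$BD$, etc.\ come together) and I would write, around each interior vertex $P$, the \emph{conformality condition}: the sum of the jump cofactors around $P$, weighted by the appropriate powers of the edge linear forms, must vanish identically as a polynomial identity. Concretely, around a vertex where edges $L_1,L_2,L_3=0$ meet, the relation $\sum_k c_k L_k^{2}\equiv 0$ (restricted to the local ring modulo the vanishing data at $P$) must hold; the number of independent constraints this imposes depends on whether the three lines are in \emph{general position} or are \emph{concurrent} with extra collinearities. The key algebraic fact I would invoke is that three distinct lines through a common point give relations among $L_1^2,L_2^2,L_3^2$ that normally span a $3$-dimensional space in $\Pi_2$, but a special geometric alignment creates an \emph{additional linear dependence}, dropping the rank of the constraint matrix by one and hence \emph{raising} the dimension of the solution space by one.

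The decisive step — and the main obstacle — is to pin down exactly which geometric degeneracy is responsible for the jump from $6$ to $7$. I would argue that after imposing the cofactor-consistency at $D$, $E$, and $F$ and the global polynomial-identity closure, the entire constraint system has a rank that is generically maximal, giving $\dim=6$; the single drop occurs precisely when the three cevian-type lines $AF$, $BE$, $CD$ are concurrent. To show this I would reduce the consistency relations to a single scalar determinant in the coordinates of the six points $A,\dots,F$ and identify the vanishing of that determinant with the concurrency of $AF,BE,CD$ — this is morally a Ceva-type condition, and recognizing the determinant as a Ceva/Menelaus expression is the crux. The expected difficulty is twofold: first, organizing the several edge cofactors ($AD,BD,DF,AE,CE,EF,BF,CF$, etc.) and the three vertex-closure identities without sign or indexing errors; and second, carrying out the elimination cleanly enough to exhibit the rank-drop factor as \emph{exactly} the concurrency condition rather than some spurious companion factor. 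Once the determinant is factored and the Ceva condition identified, the two cases of Lemma~\ref{lem dimension MS} follow immediately, and one can cross-check the count $\dim S_2^1(\Delta_{\mathrm{MS}})\in\{6,7\}$ against the cross-cut dimension formula of Lemma~\ref{lem dim cross-cut} applied to the extended cross-cut partition of $\Delta_{\mathrm{MS}}$, together with the EEE rank analysis of Theorem~\ref{thm basis construction}.
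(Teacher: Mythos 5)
First, a point of comparison: the paper does not actually prove this lemma --- it is quoted from the literature (Morgan--Scott, Alfeld, Gmelig Meyling, Wang). The closest thing to an in-paper derivation is Example~\ref{exm construction for MS}, which re-derives the dimension through the EEE framework: extend $\Delta_{\mathrm{MS}}$ to a quasi-cross-cut partition, build the nine truncated-power basis functions, and compute the rank of the $3\times 3$ EEE matrix in~\eqref{eq global EEE conditions MS}, obtaining $\dim = 9-\mathrm{rank}(M)\in\{6,7\}$ with the algebraic criterion $p_1p_2p_3=4p_{\mathrm{ED}}p_{\mathrm{DF}}p_{\mathrm{FE}}$. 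Your classical cofactor/conformality route is genuinely different from this and is, in outline, how the cited references proceed; it is a legitimate alternative strategy.

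However, your setup contains a concrete error that would derail the computation. At each interior vertex of $\Delta_{\mathrm{MS}}$, \emph{four} edges meet, not three: at $D$ the edges $DA$, $DB$, $DE$, $DF$; similarly at $E$ and $F$. This is not cosmetic. For distinct lines $L_1,\dots,L_N$ through a common point, the squares $L_k^2$ span a space of dimension $\min(N,3)$ inside $\Pi_2$; hence with only three edges per vertex the conformality identity $\sum_k c_k L_k^2\equiv 0$ would force all local cofactors to vanish, giving $\dim S_2^1(\Delta_{\mathrm{MS}})=6$ unconditionally and \emph{no} instability. The single degree of freedom per vertex (the paper's $k_2^1(4)=1$) exists precisely because four squares sit in a three-dimensional space. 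Relatedly, your proposed mechanism for the rank drop --- a ``special geometric alignment'' creating an extra dependence among squares of lines through one point --- cannot occur: that span depends only on the number of distinct lines, never on their slopes. The instability is a \emph{global} effect, arising from how the three vertex relations are coupled through the shared inner-triangle edges $DE$, $EF$, $FD$; any correct execution must eliminate down to a global system (the paper's $3\times 3$ matrix $M$ is exactly this) and show its determinant vanishes iff $AF$, $BE$, $CD$ are concurrent. That final identification, which you correctly flag as the crux, is left unexecuted in your plan --- though in fairness the paper also asserts rather than proves the equivalence of its algebraic condition with the geometric one, deferring to the cited literature.
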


% Constructing a basis for S_2^1(Delta_MS)
Following the framework in Section 3, we construct a basis for the spline space $S_2^1(\Delta_{\text{MS}})$, achieving the first successful construction of such a basis in the literature. The construction is tailored to the specific partition $\Delta_{\text{MS}}$, as the method is inherently partition-dependent. To simplify the explanation, we impose geometric restrictions on the Morgan-Scott partition:
\begin{itemize}
    \item Extensions of edges $FE$, $DF$, and $ED$ intersect only at the interior of edges $AC$, $BC$, and $AB$ respectively.
    \item Extensions of edges $AE$, $DB$, and $FC$ do not intersect at the interior of edges $FG$, $EI$, and $HD$ respectively.
\end{itemize}
These restrictions streamline the description of the basis construction process and avoid extraneous details, without altering the core results. For partitions not satisfying these conditions, minor differences in the construction process may arise, but the final conclusions remain unaffected.

\begin{example}\label{exm construction for MS}
We construct a basis for $S_2^1(\Delta_{\mathrm{MS}})$ using the framework outlined in Section 3 as follows:
\begin{itemize}
    \item Select the basis construction method $\mathcal{M}$ for spline spaces over quasi-cross-cut partitions. Extend the Morgan-Scott partition $\Delta_{\mathrm{MS}}$ to a quasi-cross-cut partition $\mathrm{ext}_3(\Delta_{\mathrm{MS}})$, as shown in the left subfigure of Figure~\ref{fig: Extended MS}.
    \item Verify that $(\mathrm{ext}_3(\Delta_{\mathrm{MS}}), \mathcal{M})$ is compatible, then construct basis functions for $S_2^1(\mathrm{ext}_3(\Delta_{\mathrm{MS}}))$ using the method in~\cite{wang2013multivariate}:
    \begin{itemize}
        \item[1.] Define a flow direction $\overrightarrow{F}$, indicated by the red dashed line in the right subfigure of Figure~\ref{fig: Extended MS}. For an edge $l_i$ we denote
        by $U(l_i^{+})$ the union of the closed cells that the flow $\overrightarrow{F}$ will reach after it crosses $l_i$ and we denote by $U(l_i^{-})$ the union of the closed cells that the flow $\overrightarrow{F}$ has reached before it arrives at $l_i$. We call $U(l_i^{+})\setminus U(l_i^{-})$ be the frontier of $l_i$ denoted by $f_r(l_i)$, follows \cite{wang2013multivariate}. Basis functions over $\mathrm{ext}_3(\Delta_{\mathrm{MS}})$ are constructed starting from the source cell $\triangle_{\text{BFG}}$ along this flow direction.
        
        \item[2.] Compute $k_2^1(\cdot)$ from equation~\eqref{kdmu} for interior vertices $D$, $E$, and $F$ to determine their degrees of freedom. With 4 rays and 0 cross-cuts, we obtain:
        \[
        k_2^1(4) = \sum_{j=1}^{2-1} \left( 4 \times (2 - 1 - j + 1) - (2 - j + 2) \right)_+ = 1.
        \]
        Each interior vertex contributes one degree of freedom. The edge cofactors around each vertex are expressed as:
\begin{itemize}
    \item For vertex $D$:
    \[
    (q_{\text{DA}}, q_{\text{DB}}, q_{\text{DF}}, q_{\text{DE}} + q_{\text{DH}}) = k_3 (p_{\text{DA}}, p_{\text{DB}}, p_{\text{DF}}, p_3).
    \]
    \item For vertex $E$:
    \[
    (q_{\text{EA}}, q_{\text{EC}}, q_{\text{ED}}, q_{\text{EF}} + q_{\text{EI}}) = k_2 (p_{\text{EA}}, p_{\text{EC}}, p_{\text{ED}}, p_2).
    \]
    \item For vertex $F$:
    \[
    (q_{\text{FB}}, q_{\text{FC}}, q_{\text{FE}}, q_{\text{FD}} + q_{\text{FG}}) = k_1 (p_{\text{FB}}, p_{\text{FC}}, p_{\text{DF}}, p_1).
    \]
\end{itemize}
Here, $q_{l_i}$ denotes the edge cofactor for edge $l_i$, $p_{l_i} \in \mathbb{R}$ is a non-zero constant for edge $l_i$, and $p_1, p_2, p_3$ are non-zero constants. The dimension of the spline space is:
\[
\dim S_2^1(\mathrm{ext}_3(\Delta_{\text{MS}})) = \binom{4}{2} + 0 \cdot \binom{2}{2} + 3 = 9.
\]

        \item[3.] Truncated power functions are defined along the flow direction $\overrightarrow{F}$ as:
\[
[L_i(x,y)]_+^{\mu+1} =
\begin{cases}
[L_i(x,y)]^{\mu+1}, & \text{if } (x,y) \in f_r(l_i), \\
0, & \text{otherwise},
\end{cases}
\]
where $L_i(x,y) = 0$ is the equation of edge $l_i$. Every function $s(x,y) \in S_2^1(\mathrm{ext}_3(\Delta))$ can be expressed using these truncated power functions.
        \[
        s(x,y) = s_0(x,y) + \sum_{\overrightarrow{F}} q_{l_i} [L_i(x,y)]_+^{2},
        \]
        where $s_0(x,y) \in \Pi_2$ and $q_{l_i} \in \mathbb{R}$ are edge cofactors for edge $l_i$. For example, in $\triangle_{\text{CEI}}$, we have:
        \[
        s(x,y)|_{\triangle_{\text{CEI}}} = s_0(x,y) + \left(k_1p_1-k_3p_{DF}\right) [L_{\text{FG}}]_+^2 + k_1 p_{\text{FC}} [L_{\text{FC}}]_+^2 + k_2 p_{\text{EC}} [L_{\text{EC}}]_+^2,
        \]
        where $p_{\text{FG}}, p_{\text{CF}}, p_{\text{EC}} \in \mathbb{R}$ are non-zero constants for edges FG, CF, and EC, and $k_1, k_2, k_3$ are the degrees of freedom for vertices $F$, $E$, and $D$, respectively. Generally:
        \[
        s(x,y) = s_0(x,y) + k_1 s_1(x,y) + k_2 s_2(x,y) + k_3 s_3(x,y),
        \]
        where:
        \begin{align*}
        s_0(x,y) &\in \Pi_2, \\
        s_1(x,y) &= p_1[L_{FG}]_{+}^2+p_{\text{FC}} [L_{\text{FC}}]_+^2 - p_{\text{FE}} [L_{\text{EI}}]_+^2, \\
        s_2(x,y) &= p_2[L_{\text{EI}}]_+^2+p_{\text{EC}} [L_{\text{EC}}]_+^2+ p_{\text{EA}} [L_{\text{EA}}]_+^2 + p_{\text{ED}} [L_{\text{ED}}]_+^2-p_{\text{ED}} [L_{\text{DH}}]_+^2, \\
        s_3(x,y) &= p_3[L_{\text{DH}}]_+^2 +p_{\text{DB}} [L_{\text{DB}}]_+^2 + p_{\text{DF}} [L_{\text{DF}}]_+^2-p_{\text{DF}} [L_{\text{FG}}]_+^2.
        \end{align*}
        with $p_{l_i} \in \mathbb{R}$ as non-zero constants for edge $l_i$. Thus, the basis for $S_2^1(\mathrm{ext}_3(\Delta))$ is:
        \[
        \{ x^a y^b, s_1(x,y), s_2(x,y), s_3(x,y) : 0 \leq a + b \leq 2 \}.
        \]
    \end{itemize}
    \item Apply the EEE conditions to eliminate edges HD, FG, and EI. Let $\{ N_i \}_{i=1}^9$ denote the nine basis functions for $S_2^1(\mathrm{ext}_3(\Delta))$. The EEE conditions are:
    \[
    \begin{cases}
    \sum_{i=1}^9 c_i \cdot \frac{\partial^2}{\partial \boldsymbol{n}_{\text{GF}}^2} \left( N_i|_{\triangle_{\text{BFG}}} - N_i|_{\triangle_{\text{CFG}}} \right) \Big|_{\text{GF}} = 0, & \text{EEE condition for GF}, \\
    \sum_{i=1}^9 c_i \cdot \frac{\partial^2}{\partial \boldsymbol{n}_{\text{EI}}^2} \left( N_i|_{\triangle_{\text{CEI}}} - N_i|_{\triangle_{\text{AEI}}} \right) \Big|_{\text{EI}} = 0, & \text{EEE condition for EI}, \\
    \sum_{i=1}^9 c_i \cdot \frac{\partial^2}{\partial \boldsymbol{n}_{\text{DH}}^2} \left( N_i|_{\triangle_{\text{BDH}}} - N_i|_{\triangle_{\text{ADH}}} \right) \Big|_{\text{DH}} = 0, & \text{EEE condition for DH}.
    \end{cases}
    \]
    These form the linear system $M \boldsymbol{c} = 0$. Let $\{ (\tilde{c}_{1,j}, \tilde{c}_{2,j}, \ldots, \tilde{c}_{9,j}) \}_{j=1}^h$ be a basis of solutions. The basis for $S_2^1(\Delta_{\mathrm{MS}})$ is:
    \[
    \left\{ \sum_{i=1}^9 \tilde{c}_{i,j} N_i \right\}_{j=1}^h.
    \]
\end{itemize}
\end{example}

\begin{figure}
    \centering
    \begin{tikzpicture}[line cap=round,line join=round,>=triangle 45,x=1.0cm,y=1.0cm,scale=0.7]
\draw [line width=1.pt] (7.16,7.08)-- (2.94,1.3);
\draw [line width=1.pt] (2.94,1.3)-- (11.1,1.66);
\draw [line width=1.pt] (11.1,1.66)-- (7.16,7.08);
\draw [line width=1.pt] (7.16,7.08)-- (6.26,4.2);
\draw [line width=1.pt] (6.26,4.2)-- (7.86,3.82);
\draw [line width=1.pt] (7.86,3.82)-- (7.16,7.08);
\draw [line width=1.pt] (6.26,4.2)-- (2.94,1.3);
\draw [line width=1.pt] (2.94,1.3)-- (6.86,2.56);
\draw [line width=1.pt] (6.86,2.56)-- (11.1,1.66);
\draw [line width=1.pt] (11.1,1.66)-- (7.86,3.82);
\draw [line width=1.pt] (7.86,3.82)-- (6.86,2.56);
\draw [line width=1.pt] (6.86,2.56)-- (6.26,4.2);
\draw [line width=1.pt,dotted,color=red] (6.26,4.2)-- (5.23503041026208,4.443430277562753);
\draw [line width=1.pt,dotted,color=red] (6.86,2.56)-- (7.251387222026121,1.4902082597952702);
\draw [line width=1.pt,dotted,color=red] (7.86,3.82)-- (8.731538076345288,4.9181379761950605);
\draw [line width=1.pt] (19.19656,7.08)-- (14.97656,1.3);
\draw [line width=1.pt] (14.97656,1.3)-- (23.13656,1.66);
\draw [line width=1.pt] (23.13656,1.66)-- (19.19656,7.08);
\draw [line width=1.pt] (19.19656,7.08)-- (18.29656,4.2);
\draw [line width=1.pt] (18.29656,4.2)-- (19.89656,3.82);
\draw [line width=1.pt] (19.89656,3.82)-- (19.19656,7.08);
\draw [line width=1.pt] (18.29656,4.2)-- (14.97656,1.3);
\draw [line width=1.pt] (14.97656,1.3)-- (18.89656,2.56);
\draw [line width=1.pt] (18.89656,2.56)-- (23.13656,1.66);
\draw [line width=1.pt] (23.13656,1.66)-- (19.89656,3.82);
\draw [line width=1.pt] (19.89656,3.82)-- (18.89656,2.56);
\draw [line width=1.pt] (18.89656,2.56)-- (18.29656,4.2);
\draw [line width=1.pt,dotted,color=red] (18.29656,4.2)-- (17.271590410262085,4.443430277562753);
\draw [line width=1.pt,dotted,color=red] (18.89656,2.56)-- (19.28794722202612,1.4902082597952702);
\draw [line width=1.pt,dotted,color=red] (19.89656,3.82)-- (20.76809807634529,4.9181379761950605);
\draw [->,line width=1.pt,dotted,color=red] (18.282245379734327,1.8261798274840548) -- (20.390919095844797,1.9080700688864027);
\draw [->,line width=1.pt,dotted,color=red] (20.390919095844797,1.9080700688864027) -- (20.206666052689513,2.890752965714578);
\draw [->,line width=1.pt,dotted,color=red] (20.206666052689513,2.890752965714578) -- (21.250766630569455,3.6072925779851226);
\draw [->,line width=1.pt,dotted,color=red] (21.250766630569455,3.6072925779851226) -- (20.063358130235404,5.3269876474344295);
\draw [->,line width=1.pt,dotted,color=red] (20.063358130235404,5.3269876474344295) -- (19.060202673056637,4.835646199020342);
\draw [->,line width=1.pt,dotted,color=red] (19.060202673056637,4.835646199020342) -- (19.03973011270605,3.361621853778079);
\draw [->,line width=1.pt,dotted,color=red] (19.03973011270605,3.361621853778079) -- (17.360980163957908,2.6655548018581214);
\draw [->,line width=1.pt,dotted,color=red] (17.360980163957908,2.6655548018581214) -- (17.197199681153215,3.8120181814909926);
\draw [->,line width=1.pt,dotted,color=red] (17.197199681153215,3.8120181814909926) -- (18.220827698682566,5.1017894835779725);
\begin{scriptsize}
\draw [fill=black] (7.16,7.08) circle (2pt);
\draw[color=black] (7.308953031819653,7.46637020407077) node {$A$};
\draw [fill=black] (2.94,1.3) circle (2pt);
\draw[color=black] (2.8,1.6726356248546523) node {$B$};
\draw [fill=black] (11.1,1.66) circle (2.5pt);
\draw[color=black] (11.239684619132372,2.0411417111652184) node {$C$};
\draw [fill=black] (6.26,4.2) circle (2.5pt);
\draw[color=black] (6.5,4.579739194638004) node {$D$};
\draw [fill=black] (7.86,3.82) circle (2.5pt);
\draw[color=black] (7.5,4.2) node {$E$};
\draw [fill=black] (6.86,2.56) circle (2.5pt);
\draw[color=black] (6.9,2.9) node {$F$};
\draw [fill=black] (5.23503041026208,4.443430277562753) circle (2.5pt);
\draw[color=black] (5.25,4.8254099188450486) node {$H$};
\draw [fill=black] (7.251387222026121,1.4902082597952702) circle (2.5pt);
\draw[color=black] (7.390843273222002,1.8) node {$G$};
\draw [fill=black] (8.731538076345288,4.9181379761950605) circle (2.5pt);
\draw[color=black] (8.864867618464272,5.296278806908549) node {$I$};
\draw [fill=black] (19.19656,7.08) circle (2.5pt);
\draw[color=black] (19.408236199016617,7.46637020407077) node {$A$};
\draw [fill=black] (14.97656,1.3) circle (2.5pt);
\draw[color=black] (14.75,1.6726356248546523) node {$B$};
\draw [fill=black] (14.97656,1.3) circle (2.5pt);
\draw[color=black] (23.338967786329338,2.0411417111652184) node {$C$};
\draw [fill=black] (18.29656,4.2) circle (2.5pt);
\draw[color=black] (18.6,4.5) node {$D$};
\draw [fill=black] (19.89656,3.82) circle (2.5pt);
\draw[color=black] (20.104303250936578,4.1) node {$E$};
\draw [fill=black] (18.89656,2.56) circle (2.5pt);
\draw[color=black] (19,2.95) node {$F$};
\draw [fill=black] (17.271590410262085,4.443430277562753) circle (2.5pt);
\draw[color=black] (17,4.8254099188450486) node {$H$};
\draw [fill=black] (19.28794722202612,1.4902082597952702) circle (2.5pt);
\draw[color=black] (19.35,1.2) node {$G$};
\draw [fill=black] (20.76809807634529,4.9181379761950605) circle (2.5pt);
\draw[color=black] (20.964150785661236,5.296278806908549) node {$I$};
\end{scriptsize}
\end{tikzpicture}
    \caption{\label{fig: Extended MS}Extended Morgan-Scott Partition and Flow Direction}
\end{figure}
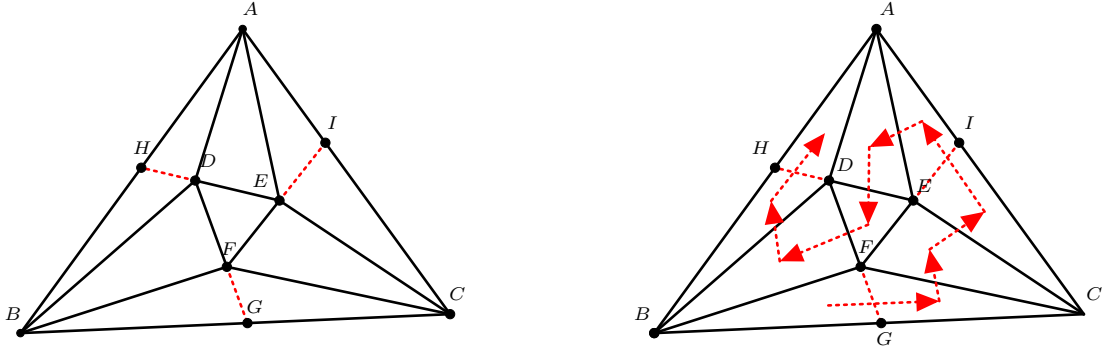

Finally, we determine the dimension $h$ of the spline space $S_2^1(\Delta_{\text{MS}})$ and analyze the dimensional instability of $S_2^1(\Delta_{\text{MS}})$ from the perspective of basis function construction.

\begin{example}
We analyze the dimensional stability of the spline space $S_2^1(\Delta_{\text{MS}})$ from Example~\ref{exm construction for MS} as follows:
\begin{itemize}
    \item For any smooth function $f(x,y)$, the second-order directional derivative is given by
    \[
    \frac{\partial^2}{\partial \boldsymbol{n}^2} f(x,y) = \boldsymbol{n} H(f) \boldsymbol{n}^{\top},
    \]
    where $H(f)$ is the Hessian matrix of $f(x,y)$. Equivalently, for $\boldsymbol{n} = (a,b)$,
    \begin{equation}\label{eq Hessian Matrix}
        \frac{\partial^2}{\partial \boldsymbol{n}^2} f(x,y) = a^2 \frac{\partial^2 f}{\partial x^2} + 2ab \frac{\partial^2 f}{\partial x \partial y} + b^2 \frac{\partial^2 f}{\partial y^2}.
    \end{equation}
    For a line $l_i$ with equation $L_i = 0$, $L_i \in \Pi_1$, the normal vector is
    \[
    \boldsymbol{n}_i = \left( \frac{\partial L_i}{\partial x}, \frac{\partial L_i}{\partial y} \right).
    \]
    For a directional vector $\boldsymbol{n}$, we have
    \begin{equation}\label{eq second order smooth}
        \frac{\partial^2}{\partial \boldsymbol{n}^2} [L_i]^2 = 2 (\boldsymbol{n}_i \boldsymbol{n}^{\top})^2.
    \end{equation}

    \item To simplify the EEE conditions, consider the condition for edge $GF$:
    \[
    \sum_{i=1}^9 c_i \cdot \frac{\partial^2}{\partial \boldsymbol{n}_{\text{GF}}^2} \left( N_i|_{\triangle_{\text{BFG}}} - N_i|_{\triangle_{\text{CFG}}} \right) \Big|_{\text{GF}} = 0.
    \]
    Let $N_1 = 1$, $N_2 = x$, $N_3 = y$, $N_4 = x^2$, $N_5 = xy$, $N_6 = y^2$, $N_7 = s_1(x,y)$, $N_8 = s_2(x,y)$, $N_9 = s_3(x,y)$. Since $N_i$, $i=1,\ldots,6$, are $C^{\infty}$ over $\mathrm{ext}_3(\Delta_{\text{MS}})$, we have 
    
    $$\frac{\partial^2}{\partial \boldsymbol{n}_{\text{GF}}^2} \left( N_i|_{\triangle_{\text{BFG}}} - N_i|_{\triangle_{\text{CFG}}} \right) \Big|_{\text{GF}} = 0; i=1,\ldots,6.$$ 
    
    Thus, only basis functions $N_7$, $N_8$, $N_9$ are considered:
    \begin{itemize}
        \item[1.] For $N_7 = p_1 [L_{FG}]_+^2 + p_{\text{FC}} [L_{\text{FC}}]_+^2 - p_{\text{FE}} [L_{\text{EI}}]_+^2$, from Figure~\ref{fig:cross-cut MS}, $[L_{\text{EI}}]_+^2 = [L_{II'}]_+^2$ and $[L_{\text{FC}}]_+^2 = [L_{CC''}]_+^2$ are $C^{\infty}$ in $GF$ except at $F$. Since $N_i \in C^1(\Delta)$, $i=1,\ldots,9$, then $\sum c_i N_i$ is $C^1$ at $F$, then edge $FG$ (except $F$) requires $C^2$ continuity. 
        
        Thus,
        
       $$ \frac{\partial^2}{\partial \boldsymbol{n}_{\text{GF}}^2} \left( N_7|_{\triangle_{\text{BFG}}} - N_7|_{\triangle_{\text{CFG}}} \right) \Big|_{\text{GF}}  =  \frac{\partial^2}{\partial \boldsymbol{n}_{\text{GF}}^2} \left( p_1 [L_{FG}]_+^2|_{\triangle_{\text{BFG}}} - p_1 [L_{FG}]_+^2|_{\triangle_{\text{CFG}}} \right) \Big|_{\text{GF}}$$
       Further, we have
       $$ \frac{\partial^2}{\partial \boldsymbol{n}_{\text{GF}}^2} \left( N_7|_{\triangle_{\text{BFG}}} - N_7|_{\triangle_{\text{CFG}}} \right) \Big|_{\text{GF}}= -p_1 \frac{\partial^2}{\partial \boldsymbol{n}_{\text{GF}}^2} [L_{FG}]^2.$$

        \item[2.] For $N_8 = p_2 [L_{\text{EI}}]_+^2 + p_{\text{EC}} [L_{\text{EC}}]_+^2 + p_{\text{EA}} [L_{\text{EA}}]_+^2 + p_{\text{ED}} [L_{\text{ED}}]_+^2 - p_{\text{ED}} [L_{\text{DH}}]_+^2$, from Figure~\ref{fig:cross-cut MS}, $[L_{\text{EC}}]_+^2 = [L_{CC'}]_+^2$, $[L_{\text{DH}}]_+^2 = [L_{H'H}]_+^2$, $[L_{\text{ED}}]_+^2 = [L_{H'H}]_+^2$, $[L_{\text{EI}}]_+^2 = [L_{II'}]_+^2$ are $C^{\infty}$ in $FG$ except at $F$, and $[L_{\text{EA}}]_+^2 = [L_{AA'}]_+^2$ is $C^{\infty}$ in $FG$ since the extension of $AE$ does not intersect $FG$ internally. Thus,
        \[
        \frac{\partial^2}{\partial \boldsymbol{n}_{\text{GF}}^2} \left( N_8|_{\triangle_{\text{BFG}}} - N_8|_{\triangle_{\text{CFG}}} \right) \Big|_{\text{GF}} = 0.
        \]

        \item[3.] For $N_9 = p_3 [L_{\text{DH}}]_+^2 + p_{\text{DB}} [L_{\text{DB}}]_+^2 + p_{\text{DF}} [L_{\text{DF}}]_+^2 - p_{\text{DF}} [L_{\text{FG}}]_+^2$, from Figure~\ref{fig:cross-cut MS}, $[L_{\text{DB}}]_+^2 = [L_{BB''}]_+^2$, $[L_{\text{DH}}]_+^2 = [L_{HH'}]_+^2$ are $C^{\infty}$ in $GF$, while $[L_{\text{GF}}]_+^2 = [L_{GG'}]_+^2$, $[L_{\text{DF}}]_+^2 = [L_{GG'}]_+^2$ are discontinuous along $GF$. Thus,
        \[
        \frac{\partial^2}{\partial \boldsymbol{n}_{\text{GF}}^2} \left( N_9|_{\triangle_{\text{BFG}}} - N_9|_{\triangle_{\text{CFG}}} \right) \Big|_{\text{GF}} = 2 p_{\text{DF}} \cdot \frac{\partial^2}{\partial \boldsymbol{n}_{\text{GF}}^2} [L_{\text{DF}}]^2.
        \]
    \end{itemize}
    Using Equation~\eqref{eq Hessian Matrix}, the EEE condition for $GF$ simplifies to
    \[
    \left( -p_1 c_7 + 2 p_{\text{DF}} c_9 \right) \frac{\partial^2}{\partial \boldsymbol{n}_{\text{FG}}^2} [L_{FG}]^2 = 0.
    \]
    By Equation~\eqref{eq second order smooth}, this reduces to
    \[
    -p_1 c_7 + 2 p_{\text{DF}} c_9 = 0.
    \]
    Applying the same method to all EEE conditions yields
    \begin{equation}\label{eq global EEE conditions MS}
        \begin{pmatrix}
            -p_1 & 0 & 2 p_{\text{DF}} \\
            p_{\text{FE}} & -p_2 & 0 \\
            0 & 2 p_{\text{ED}} & -p_3
        \end{pmatrix}
        \begin{pmatrix}
            c_7 \\ c_8 \\ c_9
        \end{pmatrix}
        =
        \begin{pmatrix}
            0 \\ 0 \\ 0
        \end{pmatrix}.
    \end{equation}
    \item Basis construction:
    
    For the matrix
\[
M = \begin{pmatrix}
    -p_1 & 0 & 2 p_{\text{DF}} \\
    p_{\text{FE}} & -p_2 & 0 \\
    0 & 2 p_{\text{ED}} & -p_3
\end{pmatrix},
\]
the rank is
\[
\mathrm{rank}(M) =
\begin{cases}
    2 & \text{if } p_1 p_2 p_3 = 4 p_{\text{ED}} p_{\text{DF}} p_{\text{FE}}, \\
    3 & \text{otherwise}.
\end{cases}
\]

% Determining the dimension
Thus, the dimension of $S_2^1(\Delta_{\text{MS}})$ is
\[
h = \dim S_2^1(\Delta_{\text{MS}}) = 9 - \mathrm{rank}(M) =
\begin{cases}
    7 & \text{if } p_1 p_2 p_3 = 4 p_{\text{ED}} p_{\text{DF}} p_{\text{FE}}, \\
    6 & \text{otherwise}.
\end{cases}
\]
This aligns with Lemma~\ref{lem dimension MS}, where equality conditions are expressed geometrically, while here they are algebraic.

% Describing basis functions
The basis functions for the two cases are:
\begin{itemize}
    \item[1.] If $p_1 p_2 p_3 \neq 4 p_{\text{ED}} p_{\text{DF}} p_{\text{FE}}$, then $\mathrm{rank}(M) = 3$, so $(c_7, c_8, c_9) = (0, 0, 0)$. The basis is $\{ N_i \}_{i=1}^6$, equivalent to the basis for $\Pi_2$. All interior edges in $\Delta_{\text{MS}}$ are degenerate.
    
    \item[2.] % Analyzing basis and degeneracy
If $p_1 p_2 p_3 = 4 p_{\text{ED}} p_{\text{DF}} p_{\text{FE}}$, then $\mathrm{rank}(M) = 2$, allowing a solution 
$$(c_7, c_8, c_9) = (2p_2p_{DF}, 2p_{DF}p_{FE}, p_1p_2).$$ 
The basis for $S_2^1(\Delta)$ is 
$$\{ N_i \}_{i=1}^6\cup\{2p_2p_{DF}\cdot N_7+2p_{DF}p_{FE}\cdot N_8+p_1p_2\cdot N_9\}.$$ 
The basis function $2p_2p_{DF}\cdot N_7+2p_{DF}p_{FE}\cdot N_8+p_1p_2\cdot N_9$ satisfies the EEE conditions, indicating automatic degeneracy of the extended edges $FG$, $EI$, and $HD$.
\end{itemize}
\end{itemize}
\end{example}

% Evaluating the matrix rank

This example constructs basis functions for the spline space over the Morgan-Scott partition, applicable to any such partition despite mild geometric constraints. We show that dimensional instability stems from the rank deficiency of the coefficient matrix in the Extended Edge Elimination (EEE) conditions, which is determined by edge cofactors associated with specific geometric configurations of the partition. This rank deficiency causes basis functions over the extended quasi-cross-cut partition to form linear combinations that produce additional basis functions satisfying the EEE conditions, thereby increasing the total number of basis functions. The basis construction is summarized as follows:
\begin{itemize}
    \item Basis functions can be constructed for dimensional unstable partitions, with instability defined by the EEE conditions.
    \item Dimensional instability results from degenerate EEE conditions.
    \item Instability occurs in partitions with both non-degenerate and degenerate smooth edge cofactors, as exemplified by the Morgan-Scott partition.
\end{itemize}

\begin{figure}
    \centering

\begin{tikzpicture}[line cap=round,line join=round,>=triangle 45,x=1.0cm,y=1.0cm,scale=0.85]
\draw [line width=1.pt] (7.595568876727873,8.868740588085977)-- (2.94,1.3);
\draw [line width=1.pt] (2.94,1.3)-- (11.1,1.66);
\draw [line width=1.pt] (11.1,1.66)-- (7.595568876727873,8.868740588085977);
\draw [line width=1.pt] (7.595568876727873,8.868740588085977)-- (6.34674269534206,4.344304750606254);
\draw [line width=1.pt] (6.34674269534206,4.344304750606254)-- (8.127855445843137,4.2828870695544925);
\draw [line width=1.pt] (8.127855445843137,4.2828870695544925)-- (7.595568876727873,8.868740588085977);
\draw [line width=1.pt] (6.34674269534206,4.344304750606254)-- (2.94,1.3);
\draw [line width=1.pt] (2.94,1.3)-- (7.206590230066718,2.8702804053639905);
\draw [line width=1.pt] (7.206590230066718,2.8702804053639905)-- (11.1,1.66);
\draw [line width=1.pt] (11.1,1.66)-- (8.127855445843137,4.2828870695544925);
\draw [line width=1.pt] (8.127855445843137,4.2828870695544925)-- (7.206590230066718,2.8702804053639905);
\draw [line width=1.pt] (7.206590230066718,2.8702804053639905)-- (6.34674269534206,4.344304750606254);
\draw [line width=1.pt,dotted,color=red] (6.34674269534206,4.344304750606254)-- (4.844431341600091,4.396108590390461);
\draw [line width=1.pt,dotted,color=red] (7.206590230066718,2.8702804053639905)-- (7.992558065337759,1.5229069734707834);
\draw [line width=1.pt,dotted,color=red] (8.127855445843137,4.2828870695544925)-- (9.10015682426621,5.773749183136528);
\draw [line width=1.pt,dash pattern=on 2pt off 2pt,color=blue] (8.127855445843137,4.2828870695544925)-- (9.85385259871687,4.2233699263519515);
\draw [line width=1.pt,dash pattern=on 2pt off 2pt,color=blue] (7.206590230066718,2.8702804053639905)-- (6.2785524070790455,1.4472890767828988);
\draw [line width=1.pt,dash pattern=on 2pt off 2pt,color=blue] (6.34674269534206,4.344304750606254)-- (5.599990702664442,5.624451023767874);
\draw [line width=1.pt,dash pattern=on 1pt off 2pt on 5pt off 4pt,color=green] (6.34674269534206,4.344304750606254)-- (5.538097212535242,1.4146219358471428);
\draw [line width=1.pt,dash pattern=on 1pt off 2pt on 5pt off 4pt,color=green] (8.127855445843137,4.2828870695544925)-- (8.445888862838341,1.542906861595809);
\draw [line width=1.pt,dash pattern=on 1pt off 2pt on 5pt off 4pt,color=green] (8.127855445843137,4.2828870695544925)-- (5.954524033117262,6.200829739039624);
\draw [line width=1.pt,dash pattern=on 1pt off 2pt on 5pt off 4pt,color=green] (7.206590230066718,2.8702804053639905)-- (4.43570182540568,3.7316210141818);
\draw [line width=1.pt,dash pattern=on 1pt off 2pt on 5pt off 4pt,color=green] (6.34674269534206,4.344304750606254)-- (8.750728995804632,6.4925347665968625);
\draw [line width=1.pt,dash pattern=on 1pt off 2pt on 5pt off 4pt,color=green] (7.206590230066718,2.8702804053639905)-- (10.010048988069295,3.9020683478561096);
\begin{scriptsize}
\draw [fill=black] (7.595568876727873,8.868740588085977) circle (2.5pt);
\draw[color=black] (7.738876799181982,9.247482954571836) node {$A$};
\draw [fill=black] (2.94,1.3) circle (2.5pt);
\draw[color=black] (2.9,1.6726356248546521) node {$B$};
\draw [fill=black] (11.1,1.66) circle (2.5pt);
\draw[color=black] (11.239684619132372,2.041141711165218) node {$C$};
\draw [fill=black] (6.34674269534206,4.344304750606254) circle (2.5pt);
\draw[color=black] (6.3,4.7) node {$D$};
\draw [fill=black] (8.127855445843137,4.2828870695544925) circle (2.5pt);
\draw[color=black] (8.271163368297247,4.7) node {$E$};
\draw [fill=black] (7.206590230066718,2.8702804053639905) circle (2.5pt);
\draw[color=black] (7.25,3.24902277184985) node {$F$};
\draw [fill=black] (4.844431341600091,4.396108590390461) circle (2.5pt);
\draw[color=black] (4.8,4.7) node {$H$};
\draw [fill=black] (7.992558065337759,1.5229069734707834) circle (2.5pt);
\draw[color=black] (8.1,1.8) node {$G$};
\draw [fill=black] (9.10015682426621,5.773749183136528) circle (2.5pt);
\draw[color=black] (9.23337370477484,6.) node {$I$};
\draw [fill=black] (9.85385259871687,4.2233699263519515) circle (2.0pt);
\draw[color=black] (9.99085843774656,4.559266634287417) node {$H'$};
\draw [fill=black] (6.2785524070790455,1.4472890767828988) circle (2.0pt);
\draw[color=black] (6.3,1.7) node {$I'$};
\draw [fill=black] (5.599990702664442,5.624451023767874) circle (2.0pt);
\draw[color=black] (5.5,5.9) node {$G'$};
\draw [fill=black] (5.538097212535242,1.4146219358471428) circle (2.0pt);
\draw[color=black] (5.5,1.7) node {$A'$};
\draw [fill=black] (8.445888862838341,1.542906861595809) circle (2.0pt);
\draw[color=black] (8.65,1.9) node {$A''$};
\draw [fill=black] (10.010048988069295,3.9020683478561096) circle (2.0pt);
\draw[color=black] (10.3,4.) node {$B'$};
\draw [fill=black] (8.750728995804632,6.4925347665968625) circle (2.0pt);
\draw[color=black] (8.936521579691327,6.88290223407904) node {$B''$};
\draw [fill=black] (5.954524033117262,6.200829739039624) circle (2.0pt);
\draw[color=black] (5.9,6.5) node {$C'$};
\draw [fill=black] (4.43570182540568,3.7316210141818) circle (2.0pt);
\draw[color=black] (4.3,4) node {$C''$};
\end{scriptsize}
\end{tikzpicture}
    \caption{\label{fig:cross-cut MS}Extension of Morgan-Scott Partition}
\end{figure}
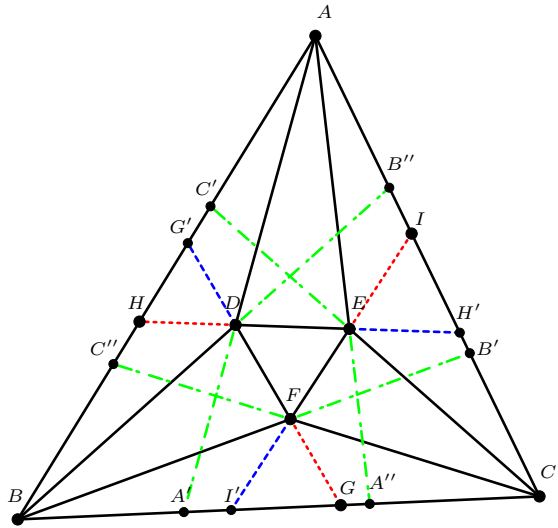

Finally, we derive a dimension formula for the spline space $S_d^{\mu}(\Delta)$ over an arbitrary partition $\Delta$ of a domain $\Omega$, utilizing the EEE conditions.

\begin{theorem}
Let $\Delta$ be a partition of $\Omega$, and let $\mathrm{ext}_s(\Delta)$ be the extended cross-cut partition of $\Delta$ with $L$ cross-cuts and $V$ interior vertices $A_1, \dots, A_V$ in $\Omega$, where $N_i$ cross-cuts intersect at $A_i$ for $i=1, \dots, V$. The dimension of the spline space $S_d^{\mu}(\Delta)$ is given by
\[
\dim S_d^{\mu}(\Delta) = \binom{d+2}{2} + L \binom{d-\mu-1}{2} + \sum_{i=1}^V k_d^{\mu}(N_i) - \mathrm{rank}(M),
\]
where $M$ is the coefficient matrix of the EEE conditions.
\end{theorem}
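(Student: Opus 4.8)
The plan is to read this dimension formula as a direct corollary of the basis-construction result in Theorem~\ref{thm basis construction}, combined with the rank--nullity theorem and the classical cross-cut dimension formula of Lemma~\ref{lem dim cross-cut}. First I would fix the setup using Lemma~\ref{lem compatible}: taking $\mathrm{ext}_s(\Delta)$ to be the extended cross-cut partition of $\Delta$ and $\mathcal{M}$ to be the construction of \cite{wang2013multivariate}, the pair $(\mathrm{ext}_s(\Delta),\mathcal{M})$ is compatible, so a genuine basis $\{N_1,\dots,N_n\}$ of $S_d^{\mu}(\mathrm{ext}_s(\Delta))$ exists and all hypotheses of Theorem~\ref{thm basis construction} are satisfied. (Here one must be careful with the clash of notation: the $N_i$ in the present statement count cross-cuts meeting at $A_i$, whereas the basis functions of $S_d^{\mu}(\mathrm{ext}_s(\Delta))$ are a different family; I would rename one of them in the write-up.)

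Next I would apply Theorem~\ref{thm basis construction} verbatim: it gives $\dim S_d^{\mu}(\Delta)=h$, where $h$ is the number of vectors in a basis of the solution space of the homogeneous global EEE system $M\boldsymbol{c}=0$ from equation~\eqref{eq global EEE}. Since $\boldsymbol{c}=(c_1,\dots,c_n)^{\top}$ ranges over an $n$-dimensional coefficient space, the matrix $M$ has exactly $n$ columns, and the rank--nullity theorem yields
\[
h=\dim\ker M=n-\mathrm{rank}(M).
\]

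The only remaining step is to evaluate $n$. By construction $n$ is the cardinality of the basis $\{N_1,\dots,N_n\}$, hence $n=\dim S_d^{\mu}(\mathrm{ext}_s(\Delta))$. Because $\mathrm{ext}_s(\Delta)$ is, by hypothesis, the extended cross-cut partition with $L$ cross-cuts and $V$ interior vertices at which $N_i$ cross-cuts meet, Lemma~\ref{lem dim cross-cut} furnishes
\[
n=\binom{d+2}{2}+L\binom{d-\mu-1}{2}+\sum_{i=1}^{V}k_d^{\mu}(N_i),
\]
and substituting into $\dim S_d^{\mu}(\Delta)=n-\mathrm{rank}(M)$ produces the claimed identity.

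The individual ingredients are each routine, so the principal work is bookkeeping rather than a genuine obstacle. The one point I would verify carefully is that the matrix $M$ named in the statement is precisely the global EEE matrix of equation~\eqref{eq global EEE} used in Theorem~\ref{thm basis construction}, so that its column count is exactly $n$ and its nullity is exactly the $h$ produced by that theorem. I would also record that the relaxation remarked after Algorithm~\ref{alg:basis_construction} --- imposing the EEE condition on the open edges while excluding the interior vertices of $\Delta$ --- does not alter $\mathrm{rank}(M)$, since $C^{\mu}$ smoothness at the interior vertices is automatically inherited by any element of $S_d^{\mu}(\mathrm{ext}_s(\Delta))$; this guarantees that the counted $h$ coincides with the nullity of the $M$ appearing in the statement.
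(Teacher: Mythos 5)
Your proposal is correct and follows exactly the route the paper intends: the theorem is stated as an immediate consequence of Theorem~\ref{thm basis construction} (which gives $\dim S_d^{\mu}(\Delta)=h=\dim\ker M$), the rank--nullity theorem ($h=n-\mathrm{rank}(M)$ since $M$ has $n$ columns), and Lemma~\ref{lem dim cross-cut} evaluating $n=\dim S_d^{\mu}(\mathrm{ext}_s(\Delta))$ for the extended cross-cut partition. Your added care about the notation clash for $N_i$ and about the vertex-relaxation of the EEE condition not changing $\mathrm{rank}(M)$ is sound but not needed beyond bookkeeping.
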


% Introducing the dimension formula for spline space over an extended quasi-cross-cut partition
\begin{theorem}
Let $\Delta$ be a partition of $\Omega$, and let $\mathrm{ext}_s(\Delta)$ be the extended quasi-cross-cut partition of $\Delta$ with $L_2$ rays and $V$ interior vertices $A_1, \dots, A_V$ in $\Omega$, where $N_i$ cross-cuts and rays intersect at $A_i$ for $i=1, \dots, V$. The dimension of the spline space $S_d^{\mu}(\Delta)$ is given by
\[
\dim S_d^{\mu}(\Delta) = \binom{d+2}{2} + L_2 \binom{d-\mu-1}{2} + \sum_{i=1}^V k_d^{\mu}(N_i) - \mathrm{rank}(M),
\]
where $M$ is the coefficient matrix of the EEE conditions.
\end{theorem}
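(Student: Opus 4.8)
The plan is to derive the formula as a single rank--nullity count, assembling Theorem~\ref{thm basis construction} with the quasi-cross-cut dimension formula of Lemma~\ref{lem dim quasi-cross-cut}. This parallels the argument for the preceding cross-cut theorem verbatim, with Lemma~\ref{lem dim cross-cut} replaced by Lemma~\ref{lem dim quasi-cross-cut}. The only genuinely new input relative to that theorem is the dimension count for a quasi-cross-cut rather than a cross-cut extension, so the bulk of the work is bookkeeping.

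First I would confirm that the hypotheses of Theorem~\ref{thm basis construction} are met for the quasi-cross-cut extension: the partition $\mathrm{ext}_s(\Delta)$ is dimensionally stable by Lemma~\ref{lem dim quasi-cross-cut}, and it admits a directly constructible basis through the flow-direction and truncated-power construction of~\cite{wang2013multivariate}, so it is a Basis Construction-Suitable Partition (this is exactly the compatible pair guaranteed by Lemma~\ref{lem compatible}). Fixing such a basis $\{N_1,\dots,N_n\}$ of $S_d^{\mu}(\mathrm{ext}_s(\Delta))$ gives $n=\dim S_d^{\mu}(\mathrm{ext}_s(\Delta))$, and Theorem~\ref{thm basis construction} identifies $\dim S_d^{\mu}(\Delta)$ with the number $h$ of vectors in a basis of solutions of the global EEE system $M\boldsymbol{c}=0$ from~\eqref{eq global EEE}. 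Since the coefficient vector $\boldsymbol{c}=(c_1,\dots,c_n)^{\top}$ ranges over an $n$-dimensional space, the rank--nullity theorem yields
$$h = \dim\ker M = n - \mathrm{rank}(M).$$
It then remains only to substitute the value of $n$: applying Lemma~\ref{lem dim quasi-cross-cut} to the quasi-cross-cut partition $\mathrm{ext}_s(\Delta)$, whose interior vertices $A_1,\dots,A_V$ carry $N_i$ incident cross-cuts and rays entering through $k_d^{\mu}(N_i)$ as in~\eqref{kdmu}, shows that the sum of the first three terms of the asserted formula is precisely $\dim S_d^{\mu}(\mathrm{ext}_s(\Delta)) = n$. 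Combining the two displays gives the stated dimension formula.

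Because the proof is a composition of results already in hand, I do not expect a deep obstacle; the subtlety lies entirely in the first step. One must ensure that the enumeration $\{N_1,\dots,N_n\}$ feeding the matrix $M$ is a genuine \emph{basis} of $S_d^{\mu}(\mathrm{ext}_s(\Delta))$, not merely a spanning set, so that the ambient dimension of the EEE system is exactly $n$ and $n-\mathrm{rank}(M)$ legitimately counts the independent solutions. This completeness is precisely what the quasi-cross-cut construction of~\cite{wang2013multivariate} supplies, and it is what separates the present framework from explicit constructions that only furnish linearly independent generators. With that verified, the formula follows immediately.
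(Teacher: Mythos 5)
Your overall route---check that the quasi-cross-cut extension together with the truncated-power construction of \cite{wang2013multivariate} is a compatible pair, invoke Theorem~\ref{thm basis construction} to identify $\dim S_d^{\mu}(\Delta)$ with the number $h$ of independent solutions of the global EEE system~\eqref{eq global EEE}, compute $h=n-\mathrm{rank}(M)$ by rank--nullity, and substitute $n=\dim S_d^{\mu}(\mathrm{ext}_s(\Delta))$---is exactly the argument the paper intends; the paper states this theorem without proof as an immediate corollary of Theorem~\ref{thm basis construction} and Lemma~\ref{lem dim quasi-cross-cut}, in parallel with the preceding cross-cut version. You are also right that the one substantive hypothesis to check is that $\{N_1,\dots,N_n\}$ is a genuine basis rather than a spanning set.

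However, your final substitution step hides a genuine mismatch that cannot be dismissed as bookkeeping. Lemma~\ref{lem dim quasi-cross-cut} gives
\[
n=\dim S_d^{\mu}(\mathrm{ext}_s(\Delta))=\binom{d+2}{2}+L_1\binom{d-\mu-1}{2}+\sum_{i=1}^{V}k_d^{\mu}(N_i),
\]
where $L_1$ is the number of \emph{cross-cuts}; rays enter only through the vertex terms $k_d^{\mu}(N_i)$. The statement you are proving has middle term $L_2\binom{d-\mu-1}{2}$ with $L_2$ the number of \emph{rays}. Your claim that ``the sum of the first three terms of the asserted formula is precisely $\dim S_d^{\mu}(\mathrm{ext}_s(\Delta))=n$'' is therefore false in general: what rank--nullity actually yields is
\[
\dim S_d^{\mu}(\Delta)=\binom{d+2}{2}+L_1\binom{d-\mu-1}{2}+\sum_{i=1}^{V}k_d^{\mu}(N_i)-\mathrm{rank}(M),
\]
which coincides with the stated formula only when $L_1=L_2$, or when $d\le\mu+2$ so that $\binom{d-\mu-1}{2}=0$ (as in the paper's Morgan--Scott example, where $d=2$, $\mu=1$ and the extension has no cross-cuts, making the discrepancy invisible). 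A careful write-up must either observe that the theorem's coefficient should be the cross-cut count---consistent with the preceding cross-cut theorem, whose $L$ matches Lemma~\ref{lem dim cross-cut} exactly---and prove that corrected statement, or explicitly restrict to the degenerate cases above. As written, your proof silently identifies two different quantities at the precise step you called ``only substitution.''
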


%%%%%%%%%%%%%%%%%%%%%%%%%%%%%%%%%%%%%%%%%%%%%%%%%%%%%%%%%%%%%%%%%%%%%%%%%%%

\section{Conclusion and future work}
  This study presents a framework for constructing basis functions for spline spaces over arbitrary meshes. We first address limitations of knot insertion algorithms in one-dimensional cases by analyzing properties of refined and coarse spline spaces, establishing a general framework. This framework is extended to two-dimensional polygonal partitions. We introduce the concept of Basis Construction-Suitable partitions, as defined in Definition~\ref{def CSP}, and clarify the relationship between dimensional stability and basis function construction: basis functions require a dimensional stable mesh. We define compatibility between a polygonal mesh $\De$ and a basis construction method $\mathcal{M}$, showing that an extended mesh $\mathrm{ext}_s(\De)$ (penetrating or quasi-penetrating) is compatible with the method. Using the EEE condition from \cite{zhong2025basisconstructionpolynomialspline}, we develop Algorithm~\ref{alg:basis_construction}, a general framework for two-dimensional basis construction, demonstrating that basis functions can be constructed on dimensionally unstable meshes, challenging prior assumptions.

We classify spline basis construction methods into three categories---explicit, semi-implicit, and implicit---based on their basis function representations, and provide a comparative analysis of their strengths and limitations. We also distinguish spline basis construction from finite element basis construction, noting that splines offer fewer degrees of freedom, suggesting potential as an alternative to traditional finite element methods. Applying our framework to the Morgan-Scott partition, we construct a basis and show that it exhibits degenerate smooth edge cofactors, confirming dimensional instability in both non-degenerate and degenerate meshes, as discussed in Section 4. This provides a valuable case study for further investigation of dimensional instability.

Finally, we would like to emphasize that this work is not only advances the theoretical understanding of spline spaces but also enhances adaptive refinement applications. By targeting only cells not meeting the error threshold $\epsilon$ for refinement, our framework avoids refining cells that satisfy $\epsilon$. This ensures error reduction in targeted cells without affecting those meeting the threshold, minimizing unnecessary computations.

Future research directions include:
\begin{itemize}
    \item Simplifying the EEE condition: As the EEE condition relies on polynomial derivatives, developing efficient methods to generate it is critical.
    \item Investigating dimensional instability: Algorithm 1 integrates dimensional instability into the EEE condition, offering a pathway to study instability via these conditions, as noted in Section 4.
    \item Optimizing basis functions: The semi-implicit method yields basis functions with coefficients satisfying the EEE condition. Selecting coefficients to enhance basis function properties is a key question for future exploration.
\end{itemize}

% \section{Acknowledgement}
% We acknowledge Dr. Shicong Zhong, School of Mathematical Sciences, University of Science and Technology of China, under the supervision of Prof. Falai Chen, for providing the EEE conditions~\cite{zhong2025basisconstructionpolynomialspline}, referred to as Zhong conditions, which are central to this study. We also thank the anonymous reviewers for their valuable feedback and suggestions.
	
\bibliographystyle{plainnat}
\bibliography{ref}

\begin{thebibliography}{51}
\providecommand{\natexlab}[1]{#1}
\providecommand{\url}[1]{\texttt{#1}}
\expandafter\ifx\csname urlstyle\endcsname\relax
  \providecommand{\doi}[1]{doi: #1}\else
  \providecommand{\doi}{doi: \begingroup \urlstyle{rm}\Url}\fi

\bibitem[Alfeld(1986)]{alfeld1986dimension}
P~Alfeld.
\newblock On the dimension of piecewise polynomial functions.
\newblock \emph{DF Gri ths, GA Watson (Eds.), Numerical Analysis, Longman Scientific \& Technical, London}, pages 1--23, 1986.

\bibitem[Alfeld and Schumaker(2002)]{alfeld2002smooth}
Peter Alfeld and Larry~L Schumaker.
\newblock Smooth macro-elements based on powell--sabin triangle splits.
\newblock \emph{Advances in Computational Mathematics}, 16\penalty0 (1):\penalty0 29--46, 2002.

\bibitem[Berdinsky et~al.(2012)Berdinsky, Oh, Kim, and Mourrain]{Ins2012}
Dmitry Berdinsky, Minjae Oh, Taewan Kim, and Bernard Mourrain.
\newblock {On the problem of instability in the dimension of a spline space over a {T}-mesh}.
\newblock \emph{Computers \& Graphics}, 36\penalty0 (5):\penalty0 507--513, 2012.

\bibitem[Boehm(1980)]{boehm1980inserting}
Wolfgang Boehm.
\newblock Inserting new knots into {B}-spline curves.
\newblock \emph{Computer-Aided Design}, 12\penalty0 (4):\penalty0 199--201, 1980.

\bibitem[Buffa et~al.(2010)Buffa, Cho, and Sangalli]{buffa2010linear}
Annalisa Buffa, Durkbin Cho, and Giancarlo Sangalli.
\newblock Linear independence of the {T}-spline blending functions associated with some particular {T}-meshes.
\newblock \emph{Computer Methods in Applied Mechanics and Engineering}, 199\penalty0 (23-24):\penalty0 1437--1445, 2010.

\bibitem[Chui and Wang(1983{\natexlab{a}})]{chui1983multivariate}
Charles~K Chui and Ren-Hong Wang.
\newblock Multivariate spline spaces.
\newblock \emph{Journal of Mathematical Analysis and Applications}, 94\penalty0 (1):\penalty0 197--221, 1983{\natexlab{a}}.

\bibitem[Chui and Wang(1983{\natexlab{b}})]{chui1983smooth}
Charles~K Chui and Ren~Hong Wang.
\newblock On smooth multivariate spline functions.
\newblock \emph{mathematics of computation}, 41\penalty0 (163):\penalty0 131--142, 1983{\natexlab{b}}.

\bibitem[Cohen et~al.(1980)Cohen, Lyche, and Riesenfeld]{cohen1980discrete}
Elaine Cohen, Tom Lyche, and Richard Riesenfeld.
\newblock Discrete {B}-splines and subdivision techniques in computer-aided geometric design and computer graphics.
\newblock \emph{Computer graphics and image processing}, 14\penalty0 (2):\penalty0 87--111, 1980.

\bibitem[Cohen et~al.(2001)Cohen, Riesenfeld, and Elber]{cohen2001geometric}
Elaine Cohen, Richard~F Riesenfeld, and Gershon Elber.
\newblock \emph{Geometric modeling with splines: an introduction}.
\newblock AK Peters/CRC Press, 2001.

\bibitem[Cottrell et~al.(2009)Cottrell, Hughes, and Bazilevs]{cottrell2009isogeometric}
J~Austin Cottrell, Thomas~JR Hughes, and Yuri Bazilevs.
\newblock \emph{Isogeometric analysis: toward integration of CAD and FEA}.
\newblock John Wiley \& Sons, 2009.

\bibitem[Deng et~al.(2000)Deng, Feng, and Kozak]{deng2000note}
Jian~Song Deng, Yu~Yu Feng, and Jernej Kozak.
\newblock A note on the dimension of the bivariate spline space over the morgan--scott triangulation.
\newblock \emph{SIAM Journal on Numerical Analysis}, 37\penalty0 (3):\penalty0 1021--1028, 2000.

\bibitem[Deng et~al.(2006)Deng, Chen, and Feng]{dim2006}
Jiansong Deng, Falai Chen, and Yuyu Feng.
\newblock Dimensions of spline spaces over {{T}-meshes}.
\newblock \emph{Journal of Computational and Applied Mathematics}, 194\penalty0 (2):\penalty0 267--283, 2006.

\bibitem[Deng et~al.(2008)Deng, Chen, Li, Hu, Tong, Yang, and Feng]{PHT}
Jiansong Deng, Falai Chen, Xin Li, Changqi Hu, Weihua Tong, Zhouwang Yang, and Yuyu Feng.
\newblock {Polynomial splines over hierarchical {T}-meshes}.
\newblock \emph{Graphical Models}, 70\penalty0 (4):\penalty0 76--86, 2008.

\bibitem[Dokken et~al.(2013)Dokken, Lyche, and Pettersen]{lr}
Tor Dokken, Tom Lyche, and Kjell~Fredrik Pettersen.
\newblock Polynomial splines over locally refined box-partitions.
\newblock \emph{Computer Aided Geometric Design}, 30\penalty0 (3):\penalty0 331--356, 2013.

\bibitem[Forsey and Bartels(1988)]{HB1}
David~R Forsey and Richard~H Bartels.
\newblock Hierarchical {B}-spline refinement.
\newblock In \emph{Proceedings of the 15th annual conference on Computer graphics and interactive techniques}, pages 205--212, 1988.

\bibitem[Giannelli and J{\"u}ttler(2013)]{HB4}
Carlotta Giannelli and Bert J{\"u}ttler.
\newblock Bases and dimensions of bivariate hierarchical tensor-product splines.
\newblock \emph{Journal of Computational and Applied Mathematics}, 239:\penalty0 162--178, 2013.

\bibitem[Giannelli et~al.(2012)Giannelli, J{\"u}ttler, and Speleers]{thb}
Carlotta Giannelli, Bert J{\"u}ttler, and Hendrik Speleers.
\newblock Th{B}-splines: The truncated basis for hierarchical splines.
\newblock \emph{Computer Aided Geometric Design}, 29\penalty0 (7):\penalty0 485--498, 2012.

\bibitem[Gmelig~Meyling and Pfluger(1985)]{gmelig1985dimension}
RHJ Gmelig~Meyling and PR~Pfluger.
\newblock On the dimension of the spline space ${S}_2^1(\triangle)$ in special cases, 1985.

\bibitem[Gro{\v{s}}elj and Speleers(2017)]{grovselj2017construction}
Jan Gro{\v{s}}elj and Hendrik Speleers.
\newblock Construction and analysis of cubic powell--sabin {B}-splines.
\newblock \emph{Computer Aided Geometric Design}, 57:\penalty0 1--22, 2017.

\bibitem[Gro{\v{s}}elj and Speleers(2023)]{grovselj2023extraction}
Jan Gro{\v{s}}elj and Hendrik Speleers.
\newblock Extraction and application of super-smooth cubic {B}-splines over triangulations.
\newblock \emph{Computer Aided Geometric Design}, 103:\penalty0 102194, 2023.

\bibitem[Huang and Chen(2024)]{huang2023}
Bingru Huang and Falai Chen.
\newblock On the stability of the dimensions of spline spaces with highest order of smoothness over {T}-meshes.
\newblock \emph{Journal of Computational and Applied Mathematics}, 441:\penalty0 115681, 2024.

\bibitem[Huang and Chen(2025)]{huang2025preliminarystudydimensionalstability}
Bingru Huang and Falai Chen.
\newblock A preliminary study on the dimensional stability classification of polynomial spline spaces over {T}-meshes, 2025.
\newblock URL \url{https://arxiv.org/abs/2508.06217}.

\bibitem[Lai and Schumaker(2007)]{mingjunlai2007}
Ming~Jun Lai and Larry~L Schumaker.
\newblock \emph{Spline functions on triangulations}, volume 110.
\newblock Cambridge University Press, 2007.

\bibitem[Li et~al.(2006)Li, Wang, and Zhang]{dim20061}
Chongjun Li, Renhong Wang, and Feng Zhang.
\newblock {Improvement on the dimensions of spline spaces on {T}-mesh}.
\newblock \emph{Journal of Information \& Computational Science}, 3\penalty0 (2):\penalty0 235--244, 2006.

\bibitem[Li and Chen(2011)]{Ins2011}
Xin Li and Falai Chen.
\newblock {On the instability in the dimension of splines spaces over {T}-meshes}.
\newblock \emph{Computer Aided Geometric Design}, 28\penalty0 (7):\penalty0 420--426, 2011.

\bibitem[Li and Deng(2016)]{dim2016}
Xin Li and Jiansong Deng.
\newblock {On the dimension of spline spaces over {T}-meshes with smoothing cofactor-conformality method}.
\newblock \emph{Computer Aided Geometric Design}, 41:\penalty0 76--86, 2016.

\bibitem[Li and Scott(2014)]{li2014analysis}
Xin Li and Michael~A Scott.
\newblock Analysis-suitable t-splines: Characterization, refineability, and approximation.
\newblock \emph{Mathematical Models and Methods in Applied Sciences}, 24\penalty0 (06):\penalty0 1141--1164, 2014.

\bibitem[Li et~al.(2012)Li, Zheng, Sederberg, Hughes, and Scott]{ast}
Xin Li, Jianmin Zheng, Thomas~W Sederberg, Thomas~JR Hughes, and Michael~A Scott.
\newblock On linear independence of {T}-spline blending functions.
\newblock \emph{Computer Aided Geometric Design}, 29\penalty0 (1):\penalty0 63--76, 2012.

\bibitem[Lyche and Morken(2008)]{lyche2008spline}
Tom Lyche and Knut Morken.
\newblock Spline methods draft.
\newblock \emph{Department of Informatics, Center of Mathematics for Applications, University of Oslo, Oslo}, pages 3--8, 2008.

\bibitem[Lyche et~al.(2022)Lyche, Manni, and Speleers]{lyche2022parsimonious}
Tom Lyche, Carla Manni, and Hendrik Speleers.
\newblock A parsimonious approach to c 2 cubic splines on arbitrary triangulations: Reduced macro-elements on the cubic wang--shi split.
\newblock In \emph{INdAM Meeting: Approximation Theory and Numerical Analysis meet Algebra, Geometry, Topology}, pages 265--287. Springer, 2022.

\bibitem[Lyche et~al.(2025)Lyche, Merrien, and Speleers]{lyche2025c1}
Tom Lyche, Jean-Louis Merrien, and Hendrik Speleers.
\newblock A c1 simplex-spline basis for the alfeld split in rs.
\newblock \emph{Computer Aided Geometric Design}, 117:\penalty0 102412, 2025.

\bibitem[Manni(1992)]{manni1992dimension}
Carla Manni.
\newblock On the dimension of bivariate spline spaces on generalized quasi-cross-cut partitions.
\newblock \emph{Journal of approximation theory}, 69\penalty0 (2):\penalty0 141--155, 1992.

\bibitem[Mokri{\v{s}} et~al.(2014)Mokri{\v{s}}, J{\"u}ttler, and Giannelli]{HB3}
Dominik Mokri{\v{s}}, Bert J{\"u}ttler, and Carlotta Giannelli.
\newblock On the completeness of hierarchical tensor-product {B}-splines.
\newblock \emph{Journal of Computational and Applied Mathematics}, 271:\penalty0 53--70, 2014.

\bibitem[Morgan and Scott(1977)]{morgan1977dimension}
John Morgan and Ridgway Scott.
\newblock The dimension of piecewise polynomials.
\newblock \emph{Unpublished manuscript}, 1977.

\bibitem[Mourrain(2014)]{dim2014}
Bernard Mourrain.
\newblock {On the dimension of spline spaces on planar {T}-meshes}.
\newblock \emph{Mathematics of Computation}, 83\penalty0 (286):\penalty0 847--871, 2014.

\bibitem[N{\"u}rnberger and Zeilfelder(2000)]{nurnberger2000developments}
G{\"u}nther N{\"u}rnberger and Frank Zeilfelder.
\newblock Developments in bivariate spline interpolation.
\newblock \emph{Journal of Computational and Applied Mathematics}, 121\penalty0 (1-2):\penalty0 125--152, 2000.

\bibitem[Patrizi et~al.(2020)Patrizi, Manni, Pelosi, and Speleers]{patrizi2020adaptive}
Francesco Patrizi, Carla Manni, Francesca Pelosi, and Hendrik Speleers.
\newblock Adaptive refinement with locally linearly independent lr {B}-splines: Theory and applications.
\newblock \emph{Computer Methods in Applied Mechanics and Engineering}, 369:\penalty0 113230, 2020.

\bibitem[Powell and Sabin(1977)]{powell1977piecewise}
Michael~JD Powell and Malcolm~A Sabin.
\newblock Piecewise quadratic approximations on triangles.
\newblock \emph{ACM Transactions on Mathematical Software (TOMS)}, 3\penalty0 (4):\penalty0 316--325, 1977.

\bibitem[Sablonni{\`e}re(1987)]{sablonniere1987composite}
Paul Sablonni{\`e}re.
\newblock Composite finite elements of class c2.
\newblock In \emph{Topics in multivariate approximation}, pages 207--217. Elsevier, 1987.

\bibitem[Schumaker(2007)]{schumaker2007spline}
Larry Schumaker.
\newblock \emph{Spline functions: basic theory}.
\newblock Cambridge university press, 2007.

\bibitem[Schumaker and Sorokina(2006)]{schumaker2006smooth}
Larry Schumaker and Tatyana Sorokina.
\newblock Smooth macro-elements on powell-sabin-12 splits.
\newblock \emph{Mathematics of computation}, 75\penalty0 (254):\penalty0 711--726, 2006.

\bibitem[Schumaker(1979)]{schumaker1979dimension}
Larry~L Schumaker.
\newblock On the dimension of spaces of piecewise polynomials in two variables.
\newblock In \emph{Multivariate Approximation Theory: Proceedings of the Conference held at the Mathematical Research Institute at Oberwolfach Black Forest, February 4--10, 1979}, pages 396--412. Springer, 1979.

\bibitem[Sederberg et~al.(2003)Sederberg, Zheng, Bakenov, and Nasri]{ts1}
Thomas~W Sederberg, Jianmin Zheng, Almaz Bakenov, and Ahmad Nasri.
\newblock {T}-splines and {T}-nurccs.
\newblock \emph{ACM transactions on graphics (TOG)}, 22\penalty0 (3):\penalty0 477--484, 2003.

\bibitem[Sederberg et~al.(2004)Sederberg, Cardon, Finnigan, North, Zheng, and Lyche]{ts2}
Thomas~W Sederberg, David~L Cardon, G~Thomas Finnigan, Nicholas~S North, Jianmin Zheng, and Tom Lyche.
\newblock {T}-spline simplification and local refinement.
\newblock \emph{ACM transactions on graphics (TOG)}, 23\penalty0 (3):\penalty0 276--283, 2004.

\bibitem[Shi(1991)]{shi1991singularity}
Xi-Quan Shi.
\newblock The singularity of morgan-scott triangulation.
\newblock \emph{Computer aided geometric design}, 8\penalty0 (3):\penalty0 201--206, 1991.

\bibitem[Speleers(2013)]{speleers2013construction}
Hendrik Speleers.
\newblock Construction of normalized {B}-splines for a family of smooth spline spaces over powell--sabin triangulations.
\newblock \emph{Constructive Approximation}, 37\penalty0 (1):\penalty0 41--72, 2013.

\bibitem[Thomas et~al.(2022)Thomas, Engvall, Schmidt, Tew, and Scott]{thomas2022u}
Derek~C Thomas, Luke Engvall, Steven~K Schmidt, Kevin Tew, and Michael~A Scott.
\newblock U-splines: Splines over unstructured meshes.
\newblock \emph{Computer Methods in Applied Mechanics and Engineering}, 401:\penalty0 115515, 2022.

\bibitem[Vuong et~al.(2011)Vuong, Giannelli, J{\"u}ttler, and Simeon]{HB2}
A-V Vuong, Carlotta Giannelli, Bert J{\"u}ttler, and Bernd Simeon.
\newblock A hierarchical approach to adaptive local refinement in isogeometric analysis.
\newblock \emph{Computer Methods in Applied Mechanics and Engineering}, 200\penalty0 (49-52):\penalty0 3554--3567, 2011.

\bibitem[Wang(2013)]{wang2013multivariate}
Ren-Hong Wang.
\newblock \emph{Multivariate spline functions and their applications}, volume 529.
\newblock Springer Science \& Business Media, 2013.

\bibitem[Zeng et~al.(2016)Zeng, Wu, Deng, and Deng]{zeng2016dimensions}
Chao Zeng, Meng Wu, Fang Deng, and Jiansong Deng.
\newblock Dimensions of spline spaces over non-rectangular {T}-meshes.
\newblock \emph{Advances in Computational Mathematics}, 42\penalty0 (6):\penalty0 1259--1286, 2016.

\bibitem[Zhong et~al.(2025)Zhong, Chen, and Huang]{zhong2025basisconstructionpolynomialspline}
Shicong Zhong, Falai Chen, and Bingru Huang.
\newblock Basis construction for polynomial spline spaces over arbitrary {T}-meshes, 2025.
\newblock URL \url{https://arxiv.org/abs/2508.12950}.

\end{thebibliography}

\end{document}